\theoremstyle{plain}
\newtheorem{prop}{Proposition}[section]
\newtheorem{coro}[prop]{Corollary}
\newtheorem{lemm}[prop]{Lemma}
\newtheorem{thm}[prop]{Theorem}
\theoremstyle{remark}
\newtheorem{ex}[prop]{Example}
\theoremstyle{remark}
\newtheorem{defn}[prop]{Definition}
\newtheorem{rem}[prop]{Remark}
\DeclareMathOperator{\sign}{sign}
\def\mcg#1;#2{\Gamma_{#1,#2}}
\def\fg#1;#2{\Pi_{#1,#2}}
\def\tb#1;#2{\mathscr{K}_{\frac{#1}{#2}}}
\begin{document}

\title[Twisting Some Classes of Links]
{Twisting Some Classes of Links}

\keywords{Adequate links, homogeneous links, alternative links, twisting}

\author{Khaled Qazaqzeh}
\address{Current Address: Department of Mathematics, Faculty of Science, Yarmouk University, Irbid, Jordan, 21163}
\email{qazaqzeh@yu.edu.jo}
\urladdr{http://faculty.yu.edu.jo/qazaqzeh}
\address{Address: Department of Mathematics, Faculty of Science,  Kuwait
University, P. O. Box 5969 Safat-13060, Kuwait, State of Kuwait}
\email{khaled.qazaqzeh@ku.edu.kw}

\author{Ahmad Al-Rhayyel}
\address{Address: Department of Mathematics, Faculty of Science, Yarmouk University, Irbid, Jordan, 21163}
\email{al-rhayyel@yu.edu.jo}
\urladdr{https://faculty.yu.edu.jo/Alrhayyel}

\date{22/11/2022}
\subjclass{57K10, 57K14}
\begin{abstract}
We apply the twisting technique that was first introduced in \cite{CK} and later generalized in \cite{QCQ} to obtain an infinite family of adequate, homogeneous  or alternative links from a given adequate, homogeneous or alternative link, respectively. Thus we conclude that these three classes of links  are infinite.
\end{abstract}

\maketitle

\section{introduction}

Many classes of links have been defined in trying to generalize the class of alternating links.
In particular, Lickorish and Thistlewaite in \cite{LT} introduced the class of adequate
links and Cromwell in \cite{C} introduced the class of homogeneous links. Later, Kauffman in
\cite{K2} introduced the class of alternative links. These
classes of links are natural generalization of alternating links each in its own aspect.
We recall the definition of these classes for the sake of making this paper more self-contained.

First, we recall the definition of adequate links as in \cite{LT}. 
Let $L$ be a link of a diagram $D$ with crossings
$c_{1},c_{2},\ldots,c_{n}$. A state of the diagram $D$ is a
function $s: \{c_{1},c_{2},\ldots,c_{n}\}\rightarrow \{1,-1\}$, and the
state diagram $sD$ is the diagram obtained from the diagram $D$
after smoothing all of its crossings according to the state $s$. We apply the $A$- or
$B$-smoothing to the crossing $c_{i}$ if $s(c_{i}) = 1$ or $s(c_{i})
= -1$, respectively according to the scheme in Figure \ref{figure1}. Note that
$sD$ consists of simple closed curves and the number of components
will be denoted by $|sD|$.

There are two special state diagrams $s_{A}D$ and $s_{B}D$. The
first one is obtained when $s_{A}(c_{i}) = 1$  and the
second one is obtained when $s_{B}(c_{i}) = -1$ for all $i$ such that $1\leq i \leq n$. The
link diagram is $A$-adequate if $|s_{A}D|> |sD|$ for every state
diagram $sD$ with $\sum_{i=1}^{n}s(c_{i}) = n-2$  and is $B$-adequate if $|s_{B}D|> |sD|$ for
every state diagram $sD$ with $\sum_{i=1}^{n}s(c_{i}) = 2-n$. This is
equivalent to say that the two arcs after smoothing each crossing in $s_{A}D$ or $s_{B}D$
belong to two different components, respectively. The link diagram is adequate
if it is $A$- and $B$-adequate at the same time and the
link is adequate if it admits a diagram that is adequate. Figure \ref{figure1} below shows a link diagram at the crossing $c$ with $\sign(c) =
1$ and both of its smoothings, $A$- and $B$-smoothings,
respectively.

\begin{figure} [h]
\begin{center}
\includegraphics[width=8cm,height=1.5cm]{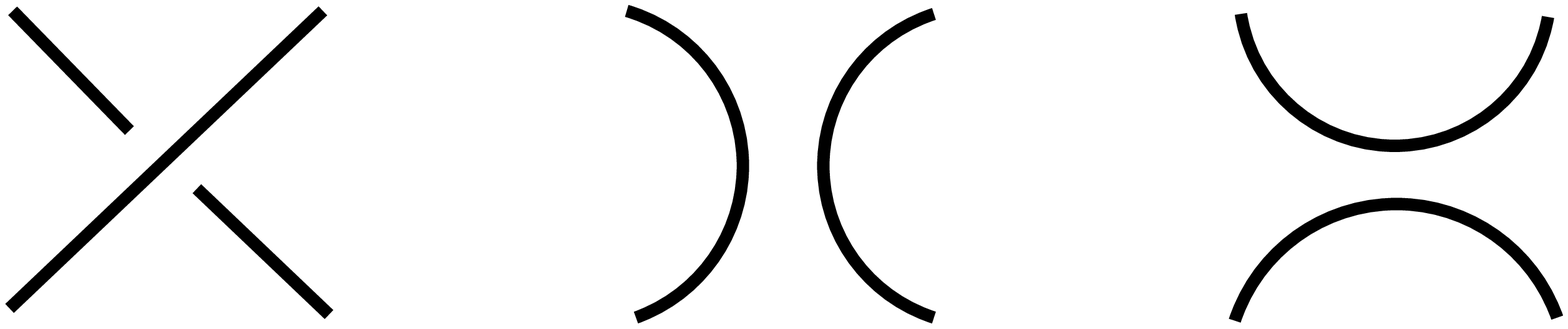}
\end{center}
\caption{}\label{figure1}
\end{figure}

Thistlewaite in \cite{T1} proved that any adequate diagram of an adequate link has the minimal
crossing number among all diagrams that represent the same link using the Kauffman polynomial. The same result is
obtained using the Jones polynomial \cite{LT}. Moreover, the result
of \cite{T1} implies that the minimal crossing diagram of an adequate link is also adequate.

Second, we recall the definition of homogeneous links as in \cite{C}. Seifert in \cite{Se} defined an algorithm for constructing an
orientable surface spanning the link $L$ from its oriented diagram
$D$. The deformation retraction of this surface is a graph $G$. The
vertices of $G$ correspond to the Seifert circles after collapsing
them to points. The edges of $G$ correspond to the twisted
rectangles that connect the Seifert circles, and hence the crossings
in $D$. The edges of $G$ can be equipped with signs according to the type of the crossing
given in Figure \ref{figure2}. The resulting graph with signs
is called Seifert graph.

\begin{figure} [h]
\begin{center}
\includegraphics[scale=0.10]{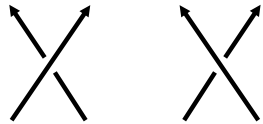}\hspace{1cm}
\reflectbox{\includegraphics[scale=0.10]{figure2.eps}}
\end{center} \caption{Type I and II crossings,
respectively.}\label{figure2}
\end{figure}

In any graph, a cut vertex $v$ is a vertex whose removal increases the number of components. If the components of $G-v$ are
$G_{1},G_{2},\ldots,G_{n}$. Then $G_{1}\cup v,G_{2}\cup v, \ldots,
G_{n}\cup v$ are subgraphs of $G$ obtained by cutting $G$ at $v$.
Cutting $G$ at each of its cut vertices yields a family of subgraphs,
no one of which has a cut vertex. These subgraphs are called the blocks of the graph 
$G$.

A Seifert graph is homogeneous if all of the edges in each block have
the same sign. A link diagram is homogeneous if the corresponding
Seifert graph is homogeneous. The link is homogeneous if it admits a
diagram that is homogeneous.

Finally, we recall the definition of alternative links via the characterization introduced in
\cite[Theorem\,19]{Si}. This characterization is given in terms of enhanced
checkerboard graph. This graph $\Phi(D)$ is a signed planar digraph constructed from the link
diagram $D$ in the following way. We color the regions of the diagram into
two colors black and white such that regions which share an arc have different
colors. We then place a vertex in each region colored according to the color of the region.
Two vertices are connected by a directed edge with a sign according to the scheme in Figure \ref{figure5}.

\begin{figure}[htbp]
    \begin{center}
\includegraphics[scale=0.17]{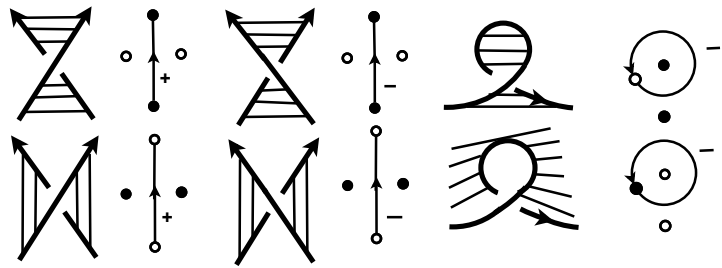}
\end{center}
\caption{The edges and vertices of the graph $\Phi(D)$}\label{figure5}
\end{figure}

A planar signed digraph $G$ is said to be alternative if it does not contain any walk with
positive and negative edges. The link diagram is alternative if the corresponding graph $\Phi(D)$
is alternative. The link is alternative if it has a diagram that is alternative.

In this paper, we explain how to construct an infinite family of adequate, homogeneous, or alternative links 
from a given adequate, homogeneous or alternative link, respectively using the twisting technique. The twisting technique  introduced for the first time in \cite{CK} to construct an infinite family of quasi-alternating links from a given quasi-alternating link by replacing a particular crossing by some rational tangle. This technique was later generalized in \cite{QCQ}  by replacing the particular crossing by a block of rational tangles not just a rational tangle. 

\section{Rational Tangles and Determinant}

In this section, we briefly recall and review some properties of rational tangles that will be used in the rest of this paper. It is well-known that each rational tangle is characterized by the continued fraction of some rational number that is called the slope of the rational tangle. The continued fraction of the rational number $\frac{\beta}{\alpha}$
is given by
\begin{align*}\label{continued}
\frac{\beta}{\alpha} =
\cfrac{1}{a_{n}+\cfrac{1}{a_{n-1}+\cfrac{1}{\ddots
+\cfrac{1}{a_{1}}}}},
\end{align*}
where $a_1,a_{2}, \dots, a_n$ are called the integer denominators of the continued
fraction. This continued fraction will be denoted by the sequence of
integers $[a_{1},a_{2},\ldots,a_{n}]$. It is a simple fact that each
rational number admits such a finite continued fraction.

A rational tangle is a proper embedding of the disjoint union of two
arcs into a cube, the embedding sends the end points of the two
arcs to four marked points NE, NW, SE, and SW with the symbols referring to
the compass directions on the cube's boundary. 

The characterization of a rational tangle by the continued fraction of its slope is given by the scheme as shown in Figure \ref{figure3}.
The $i$-th box in Figure \ref{figure3} corresponds to $|a_{i}|-$
horizontal crossings of positive sign if $a_{i}$ is positive and $i$ is odd or $a_{i}$
is negative and $i$ is even and otherwise it corresponds to $|a_{i}|-$horizontal
crossings of negative sign, where the sign of the crossing is according to the scheme in Figure \ref{figure1}. 

\begin{figure}[htbp]
    \begin{center}
\includegraphics[scale=0.16]{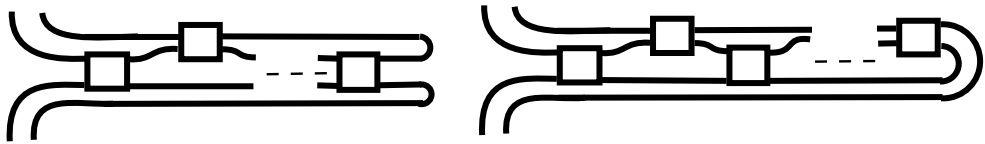}
\end{center}
\caption{The closure of a rational tangle according to the number of denominators being odd or even respectively}\label{figure3}
\end{figure}

The product of rational tangles is defined by placing one tangle on top of the other tangle by identifying the end points of the top tangle marked with SE, SW to the end points of bottom tangle marked with NE, and
NW, respectively.  Also, the sum of two rational tangles is defined by juxtaposing the first
tangle next to the second one and identifying the end points of the
left tangle marked with NW, and SW to the end points of
the right tangle marked with NE, and SE, respectively. A block of
rational tangles is a finite product or/and sum of rational tangles. For further discussion of rational tangles and their operations, we refer the reader to \cite{KaLa}, and \cite{KaLo}. The following lemma lists some of the well-known properties of the continued fraction that can be found in many references for example \cite{KaLa, KaLo, QCQ}.

\begin{lemm}\label{properties}
\begin{enumerate}
\item If $[a_{1},a_{2},\ldots,a_{n}]$ is a continued
fraction of $\frac{\beta}{\alpha}$, then $[-a_{1},-a_{2},\ldots,
-a_{n}]$ is a continued fraction of $\frac{-\beta}{\alpha}$.

\item If $\frac{\beta}{\alpha}$ is a positive rational number, then there
is a continued fraction $[a_{1},a_{2},\ldots,a_{n}]$ of nonnegative
integers.


\item The rational tangle of corresponding continued fraction $[a_{1},a_{2}, \ldots, a_{n-1}, \pm 1]$
is isotopic to the rational tangle of corresponding continued fraction $[a_{1},a_{2}, \ldots, a_{n-1}\pm 1]$.
\end{enumerate}
\end{lemm}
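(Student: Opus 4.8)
The three items are all standard manipulations of continued fractions, so the proof is essentially a sequence of elementary verifications; the only mild subtlety is the bookkeeping of signs and the parity convention used in Figure \ref{figure3}.

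For item (1), the plan is to argue by induction on $n$. The base case $n=1$ is the trivial identity $\frac{1}{-a_{1}} = \frac{-1}{a_{1}}$. For the inductive step, write $\frac{\beta}{\alpha} = \cfrac{1}{a_{n}+\theta}$ where $\theta = [a_{1},\ldots,a_{n-1}]$ as a continued fraction (so $\theta$ is the value of the truncated fraction); by the inductive hypothesis $[-a_{1},\ldots,-a_{n-1}]$ evaluates to $-\theta$, hence $[-a_{1},\ldots,-a_{n}]$ evaluates to $\cfrac{1}{-a_{n}-\theta} = -\cfrac{1}{a_{n}+\theta} = \frac{-\beta}{\alpha}$. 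Nothing here is an obstacle.

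For item (2), the plan is again induction on the length, using the Euclidean-type algorithm that produces the continued fraction. Given a positive rational $\frac{\beta}{\alpha}$ in lowest terms, one can always choose the first partial quotient so that the remaining fraction is again positive (at each stage, if the current positive rational is $\frac{p}{q}$ with $0<p\le q$, write $\frac{q}{p} = a + \frac{r}{p}$ with $a\ge 1$ and $0\le r<p$, so that $\frac{p}{q} = \cfrac{1}{a + \frac{r}{p}}$ and $\frac{r}{p}$ is nonnegative with strictly smaller denominator). Iterating terminates and yields all $a_i\ge 0$. The only point to flag is the trivial/degenerate cases (when some remainder is $0$ one simply stops), which is routine.

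For item (3), the plan is to use the identity $a_{n-1} \pm 1 = a_{n-1} + \cfrac{1}{\pm 1}$, which shows that the two continued fractions $[a_{1},\ldots,a_{n-1}\pm 1]$ and $[a_{1},\ldots,a_{n-1},\pm 1]$ represent the same rational number; since rational tangles are classified up to isotopy by the value of their continued fraction (Conway's theorem, as recalled in \cite{KaLa,KaLo}), the corresponding tangles are isotopic. The only thing one must be slightly careful about is that appending $\pm 1$ changes the length by one and hence flips the odd/even parity convention in Figure \ref{figure3}; one should check that the sign assignment to the crossings in the two pictures is consistent with this parity shift, i.e. that the geometric move which absorbs the final $\pm 1$-box into the previous box indeed adds (or subtracts) one crossing of the correct sign. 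This sign-and-parity check is the main — though still elementary — point of the argument; everything else reduces to the arithmetic identities above.
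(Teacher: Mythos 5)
The paper does not actually prove this lemma: it is presented as a list of well-known facts with a pointer to \cite{KaLa}, \cite{KaLo} and \cite{QCQ}, so there is no in-house argument to compare yours against; your proof supplies the standard arguments and is essentially correct. Two points of bookkeeping should be made explicit. First, in item (2) your reduction step assumes the current fraction $\frac{p}{q}$ satisfies $0<p\le q$; when $\frac{\beta}{\alpha}>1$ you must first insert a zero denominator, writing $\frac{\beta}{\alpha}=\cfrac{1}{0+\frac{\alpha}{\beta}}$ and then recursing on $\frac{\alpha}{\beta}<1$ --- this is exactly why the statement allows \emph{nonnegative} rather than positive integers, and it is not merely a degenerate case. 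Second, in item (3) the identity $a_{n-1}+\cfrac{1}{\pm 1}=a_{n-1}\pm 1$ converts $[a_{1},\ldots,a_{n-1},\pm 1]$ into $[a_{1},\ldots,a_{n-1}\pm 1]$ only if the appended entry $\pm 1$ sits in the innermost position of the continued fraction, adjacent to $a_{n-1}$ --- that is, under the convention of \cite{KaLa} in which the last listed denominator is the deepest one. Under the convention displayed in Section 2 of the paper (where $a_{1}$ is innermost and $a_{n}$ outermost) the two expressions do not evaluate to the same rational number, so before invoking Conway's classification of rational tangles by their fractions you must fix the convention under which the arithmetic identity actually applies. Once that is done, the parity-and-sign check you flag (that the single-crossing box of the last level can be absorbed as one extra crossing of the correct sign into the previous level) is precisely the geometric content of the isotopy, and your argument goes through.
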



The determinant of a link is an invariant that can be defined in terms of the number of signed spanning tress
of the Tait graph. The Tait graph $G(L)$ associated to the link diagram $L$ is obtained from the
checkerboard coloring of the regions of the link diagram. The vertices of $G(L)$ are the shaded
regions and the signed edges correspond to crossings according to the scheme in Figure \ref{figure4}.

\begin{figure}[htbp]
    \begin{center}
\includegraphics[scale=0.12]{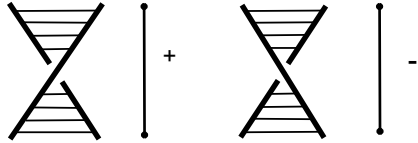}
\end{center}
\caption{The signs of the edges of the Tait graph $G(L)$}\label{figure4}
\end{figure}

For a given link $L$ at the crossing $c$, we let $L_{0}$ and $L_{1}$ be the links obtained by applying the $A-$ and $B-$smoothings respectively to the link $L$ at the given crossing
according to the scheme in Figure \ref{figure1}.
We can choose checkerboard coloring of the regions of $L$ so that the edge $e$
that corresponds to $c$ is positive in $G(L)$. It is well-known that
the spanning trees of $G(L)$ are in one-to-one correspondence of the
spanning trees of $G(L_{0})$ and $G(L_{1})$. In particular, any
spanning tree of $G(L)$ corresponds to a spanning tree of $G(L_{0})$
if it does not contain $e$ and to a spanning tree of $G(L_{1})$ in the other
case.

The number of positive edges in any spanning tree $T$ of $G$ will be
denoted by $v(T)$ and the number of spanning trees of $G$ with $v(T)
= v$ will be denoted by $s_{v}(L)$. The numbers $\sum\limits_{v}
(-1)^{v} s_{v-1}(L_{0})$, $\sum\limits_{v} (-1)^{v} s_{v}(L_{1})$
will be denoted by $x$ and $y$ respectively for the following lemma whose proof can be obtained by adopting the proof of \cite[Theorem\,2.1]{CK} with the appropriate modifications. This lemma may considered as a natural generalization of \cite[Lemma\,3.1]{QCQ} and its proof.

\begin{lemm}\label{help}
Let $L^{*}$ be the link obtained from $L$ by replacing the crossing $c$ by a
rational tangle of slope $\frac{\beta}{\alpha}$ with corresponding continued fraction of
denominators of the same sign as the sign of $c$.
Then we have
\[ \det(L^{*}) =
  \begin{cases}
  \left|\alpha \det(L_{0}) +\sign(xy) \beta \det(L_{1})\right|,
     & \text{if $\sign(c) = 1 $},
     \\
    {\left|\alpha \det(L_{1}) +\sign(xy) \beta \det(L_{0})\right|}, & \text{if $\sign(c) = -1
    $}.
  \end{cases}
\]
\end{lemm}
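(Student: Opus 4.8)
The plan is to set up a continued–fraction induction on the number of denominators $n$ in the continued fraction $[a_1,\dots,a_n]$ of $\frac{\beta}{\alpha}$, using the determinant skein relation as the engine. First I would reduce to the case $\sign(c)=1$; the case $\sign(c)=-1$ will follow by mirroring, since replacing the crossing by a tangle with denominators of sign $-1$ turns into the positive case after taking the mirror image, and mirroring leaves the determinant unchanged while swapping the roles of $L_0$ and $L_1$. For the base case $n=1$, the tangle of slope $\frac{1}{a_1}$ is simply a vertical twist region of $|a_1|$ crossings all of the sign of $c$; iterating the elementary skein identity $\det(L)=\pm\det(L_0)\pm\det(L_1)$ (coming from the spanning-tree correspondence recalled just before the lemma, i.e. the Tait-graph contraction–deletion at the positive edge $e$) gives $\det(L^{*})=|a_1\det(L_0)+\varepsilon\det(L_1)|$ for the appropriate sign $\varepsilon$, and the content of the lemma is the identification $\varepsilon=\sign(xy)$.

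The inductive step uses the recursive structure of rational tangles encoded in Figure~\ref{figure3}: a tangle with continued fraction $[a_1,a_2,\dots,a_n]$ is built from the tangle with continued fraction $[a_2,\dots,a_n]$ by adding a twist region of $|a_1|$ crossings in the remaining direction. So I would write $\frac{\beta}{\alpha}=[a_1,\dots,a_n]$ and let $\frac{\beta'}{\alpha'}=[a_2,\dots,a_n]$; the usual continued-fraction recursion gives $\alpha=a_1\beta'+\alpha'$ wait—more precisely, if $\frac{\beta}{\alpha}=\cfrac{1}{a_n+\cfrac{1}{\ddots+\cfrac1{a_1}}}$ then passing from $[a_1,\dots,a_{n-1}]$ (numerator/denominator $\beta'',\alpha''$) to $[a_1,\dots,a_n]$ multiplies the denominator vector by $\begin{pmatrix}a_n&1\\1&0\end{pmatrix}$, so at each stage the new determinant data $(\alpha,\beta)$ is an integer combination of the previous two. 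Smoothing the outermost crossing of the newly added twist region expresses $L^{*}$ via $\det(L^{*})=|a_n|\det((L^{*})_0)+\varepsilon'\det((L^{*})_1)$ where one of $(L^{*})_0,(L^{*})_1$ is the link with the shorter tangle $[a_1,\dots,a_{n-1}]$ inserted and the other has $[a_1,\dots,a_{n-2}]$ inserted (this is exactly the geometry of removing one crossing from a twist box), and then I would feed in the inductive hypothesis for both shorter tangles. Collecting terms and using the continued-fraction recursion for the pair $(\alpha,\beta)$ reproduces $|\alpha\det(L_0)+\sign(xy)\beta\det(L_1)|$.

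The main obstacle is bookkeeping the signs: one must check that the signs $\varepsilon$ appearing at each skein smoothing, which depend on which edge of the Tait graph is being contracted versus deleted and on the checkerboard coloring convention in Figure~\ref{figure4}, assemble coherently into the single global factor $\sign(xy)$, and that the hypothesis ``denominators all of the same sign as $c$'' is exactly what guarantees every intermediate twist region contributes with a consistent sign (so that no cancellation of the twisting crossings occurs and the coefficient of $\det(L_0)$ genuinely grows like $|\alpha|$ rather than collapsing). Here the quantities $x=\sum_v(-1)^v s_{v-1}(L_0)$ and $y=\sum_v(-1)^v s_v(L_1)$ are, up to sign, $\det(L_0)$ and $\det(L_1)$ themselves, so $\sign(xy)$ records the relative sign of the two smoothed determinants in the unreduced (signed) count of spanning trees; the cleanest route is to carry the \emph{signed} determinant $\sum_v(-1)^v s_v$ through the whole induction without taking absolute values, prove the identity on the nose as an equality of signed integers, and only take $|\cdot|$ at the very end. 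This is precisely why the lemma is flagged as following by ``adopting the proof of \cite[Theorem\,2.1]{CK} with the appropriate modifications'': the combinatorial skeleton is identical to the quasi-alternating case, and only the sign accounting for a general (rather than length-one) twist region needs to be redone, which is the content of the generalization in \cite[Lemma\,3.1]{QCQ}.
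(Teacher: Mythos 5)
Your proposal is correct and shares the paper's overall skeleton: reduce to $\sign(c)=1$ by mirroring, induct on the number $n$ of denominators, and handle the integer-tangle case via the Tait-graph spanning-tree correspondence, carrying the signed sum $\sum_{v}(-1)^{v}s_{v}(L^{*})$ through exactly and taking absolute values only at the end (the paper computes $s_{v}(L^{*})=s_{v-a_{1}}(L_{0})+a_{1}s_{v-a_{1}+1}(L_{1})$ and hence $\det(L^{*})=|x+a_{1}y|=\left|\sign(xy)\det(L_{0})+a_{1}\det(L_{1})\right|$, which is precisely your base case with $\varepsilon=\sign(xy)$ made explicit). Where you genuinely diverge is the inductive step: you resolve the last twist box and invoke the three-term convergent recursion $q_{n}=a_{n}q_{n-1}+q_{n-2}$, which forces a strong induction on the two previous lengths and a separate check of the degenerate initial cases (the empty continued fraction and $[a_{1}]$, corresponding to $L_{0}$ and $L_{1}$ themselves), plus the matrix bookkeeping for how $(\alpha,\beta)$ transforms. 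The paper instead uses part (3) of Lemma \ref{properties}: the tangle $[a_{1},\ldots,a_{n-1},1]$ is isotopic to $[a_{1},\ldots,a_{n-1}+1]$, which has only $n-1$ denominators, so the formula holds for it by ordinary induction; one then re-applies the $n=1$ spanning-tree computation locally to the single crossing occupying the last box to expand $[a_{1},\ldots,a_{n-1},1]$ into $[a_{1},\ldots,a_{n}]$. This sidesteps the two-term recursion entirely, at the mild cost of observing that the $n=1$ argument applies verbatim to a crossing sitting inside an already-inserted tangle. Both routes handle the sign issue identically: the hypothesis that all denominators share the sign of $c$ is what prevents cancellation, and the factor $\sign(xy)$ emerges from the identity $|a+b|=\left|\sign(ab)|a|+|b|\right|$ applied at the final step.
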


\section{Main Results}

We recall the twisting technique that was first
introduced in \cite[Page\,2452]{CK} and later generalized in \cite{QCQ}. This technique replaces a particular crossing in a link diagram by some rational tangle or sum or product of rational tangles in the class of quasi-alternating links.
\begin{defn}
The rational tangle associated to the continued fraction
$[a_{1},a_{2},\ldots,a_{n}]$ of denominators of the same sign extends the crossing $c$ if the sign of the crossing $c$ is the same as the common sign of the denominators of this continued fraction, 
where the sign of $c$ is defined according to the scheme in Figure \ref{figure1}. A block of rational tangles extends the crossing $c$ if each rational tangle in the block extends it.
\end{defn}

The above definition can be modified to the case of oriented rational tangles as follows:

\begin{defn}
The orientation of a rational tangle associated to the continued fraction
$[a_{1},a_{2},$ $\ldots,a_{n}]$ of denominators of the same sign extends the orientation of the
crossing $c$ if the crossings in the $i$-th box are of the same type as the type of the oriented crossing $c$ according to the scheme in Figure \ref{figure2} if $i$ is odd and of the opposite type if $i$ is even.  
An oriented rational tangle extends the oriented crossing $c$ if it extends it with no
orientation and its orientation extends the orientation of $c$.
The block of oriented rational tangles extends the oriented crossing $c$ if each oriented
rational tangle in the block extends it.
\end{defn}


We point out that not all rational tangles that extend the crossing $c$ can be equipped with
an orientation that extends the orientation of $c$. Now we state the first main theorem in this paper:

\begin{thm}\label{new1}
Let $L$ be an adequate link with adequate diagram $D$. If
$L^{*}$ is the link obtained by replacing any crossing in $D$ by a block of
rational tangles that extends it, then $L^{*}$ is adequate with corresponding adequate diagram
$D^{*}$.
\end{thm}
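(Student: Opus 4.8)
The plan is to prove the statement first for a single rational tangle and then propagate to a block by induction on the number of tangles, since a block is built by products and sums of rational tangles and each such operation can be analyzed locally. For a single rational tangle $T$ associated to a continued fraction $[a_1,\ldots,a_n]$ of denominators sharing the sign of $c$, I would further induct on $n$, reducing (via Lemma \ref{properties}(3), which replaces a trailing $\pm 1$) to the base case where $T$ is a horizontal (or vertical) row of $|a_1|$ crossings all of the sign of $c$. So the crux is: if $D$ is adequate and we replace a crossing $c$ by $k$ parallel crossings of the same sign, the resulting diagram $D'$ is still adequate.

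For that base case I would argue directly on the all-$A$ and all-$B$ state diagrams. Fix the checkerboard/state conventions so that, say, $\mathrm{sign}(c)=1$; the $A$-smoothing of $c$ and the $B$-smoothing of $c$ then have a definite local picture (Figure \ref{figure1}), and because the $k$ new crossings form a twist region of crossings all of the same sign, the $A$-smoothing of each of them reconnects the two strands in the ``same'' way as the $A$-smoothing of $c$ did, merely inserting extra little circles — in fact $s_A D' = s_A D$ with $k-1$ extra disjoint simple closed circles, so $|s_A D'| = |s_A D| + (k-1)$. The $B$-smoothing of the twist region, by contrast, yields a chain that connects to the rest of $s_B D$ exactly as the single $B$-smoothing of $c$ did, and introduces no new circles, so $|s_B D'| = |s_B D|$. (If $\mathrm{sign}(c)=-1$ the roles swap, or one passes to the mirror image.) The key point is that the $A$-adequacy condition ``the two arcs at each smoothed crossing lie in different components'' is preserved: at the old crossings it is unaffected by the local modification, at each new crossing the two arcs lie on the two sides of one of the newly created bigons/circles, which are distinct components of $s_A D'$, and the side that merges back into $s_A D$ remains adequate because $D$ was. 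The same holds on the $B$ side because there the twist region is smoothed without creating or identifying the relevant components. Hence $D'$ is $A$- and $B$-adequate.

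Then I would run the two inductions. For the inner induction on $n$: having the result for $[a_1,\ldots,a_{n-1}+1]$ (which has $n-1$ denominators), Lemma \ref{properties}(3) identifies the tangle with $[a_1,\ldots,a_{n-1},1]$, and the base-case argument applied to the single crossing sitting in the last box (a twist region of one crossing, expandable to $|a_n|$) upgrades it to $[a_1,\ldots,a_n]$; one must check the sign bookkeeping from the definition of ``extends the crossing'' so that the new box's crossings genuinely have the sign demanded by adequacy — this is exactly where the hypothesis that the tangle extends $c$ is used. For the outer induction on the number of rational tangles in the block: a block is obtained from a sub-block by a tangle sum or product, i.e. by grafting a new rational tangle into $D^{*}$ along a pair of its endpoints; since the new tangle again only modifies $D^{*}$ inside a disk and, by the single-tangle case, does so adequately, and since adequacy is a condition that can be verified locally at each crossing in the all-$A$/all-$B$ states, the grafted diagram is again adequate.

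The main obstacle I anticipate is the honest verification that the local modification really does preserve the global adequacy inequalities $|s_A D|>|sD|$ (resp.\ $|s_B D|>|sD|$) for \emph{all} near-extremal states, not merely that the two arcs at each crossing of $s_A D^{*}$/$s_B D^{*}$ lie in different components — i.e.\ making precise the claim that ``the two arcs at a smoothed crossing lie in different components'' is equivalent to adequacy and then tracking how smoothing one crossing of the twist region $A$-ly and the rest $B$-ly (the states contributing to the inequality) affects component counts. Concretely one shows that if some state $s$ of $D^{*}$ with $\sum s(c_i)=\#\{\text{crossings}\}-2$ had $|sD^{*}|\ge |s_A D^{*}|$, then collapsing the twist region back down would produce a state $s'$ of $D$ violating $|s'D|<|s_A D|$; the careful part is counting the circles gained or lost in the twist region under a state that is $A$ on all but one crossing there. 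This is a finite case check — ``twist region smoothed all-$A$ except one crossing'' has essentially one shape — but it is the place where a reader will want the details, and it is the step whose proof the paper will need to spell out.
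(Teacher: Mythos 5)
Your proposal is correct and follows essentially the same route as the paper's proof: establish the integer-tangle (twist-region) base case by checking that in $s_{A}D^{*}$ and $s_{B}D^{*}$ the two arcs at every crossing still lie on different state circles, then induct on the length of the continued fraction using the isotopy $[a_{1},\ldots,a_{n-1},\pm 1]\simeq[a_{1},\ldots,a_{n-1}\pm 1]$ from Lemma \ref{properties}. Your extra care about the equivalence between the arc-separation condition and the inequality $|s_{A}D|>|sD|$ for near-extremal states, and your explicit outer induction over the tangles of the block, only fill in details the paper treats as standard or leaves implicit.
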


\begin{proof}
Let $c$ be any crossing of the adequate diagram $D$.  By taking the mirror image if necessary, we may assume that $\sign(c) = 1$ and later use the fact that the diagram is adequate iff its mirror image is adequate to obtain the required result.

We prove the result first in the case when the crossing is
replaced by an integer tangle that extends it. Let $D^{m}$ be the link
diagram obtained from $D$ after replacing $c$ by an integer tangle of $m$ vertical or horizontal crossings that extends this crossing. In the special states $s_{A}D$ and $s_{B}D$, the two arcs in smoothing any crossing
belong to two different components since $D$ is adequate. In particular, this is
the case for the two arcs in smoothing $c$. Now it is obvious that
this also holds for any crossing of $D^{m}$ to obtain $s_{A}D^{m}$ or $s_{B}D^{m}$. 

Now we prove that any crossing of this integer tangle
can be replaced by a rational tangle that extends it and at the same time extends the 
crossing $c$. For this end, let $[a_{1},a_{2},\ldots, a_{n}]$ be the continued fraction of
some rational tangle that extends $c$. We use induction on the number of denominators of the
continued fraction to prove our claim. The above argument implies the result for $n=1$.
Now the rational tangle corresponding to the continued fraction $[a_{1},a_{2},\ldots, a_{n-1}+1]$
is isotopic to the rational tangle corresponding  to the continued fraction
$[a_{1},a_{2},\ldots, a_{n-1}, 1]$ from the fourth part of Lemma \ref{properties}. Now, it is not too hard to see 
that the diagram obtained by replacing the crossing $c$ by the rational tangle of continued fraction $[a_{1},a_{2},\ldots, a_{n-1}+1]$ is adequate iff the diagram obtained by replacing the crossing $c$ by the rational tangle of continued fraction $[a_{1},a_{2},\ldots, a_{n-1},1]$ is adequate. Finally, the result follows using the induction hypothesis and the result for the case when $n=1$.
\end{proof}
\begin{coro}
For the links $L$ and $L^{*}$, we have $c(L^{*}) = c(L) + c(b) = c(D) +c(b)$ where $c(b)$ is the number of crossings of the block of rational tangles that extends some crossing $c$ in the link diagram $D$.
\end{coro}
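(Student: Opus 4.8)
The plan is to establish the corollary by combining Theorem~\ref{new1}, which guarantees that $D^{*}$ is an adequate diagram of $L^{*}$, with Thistlethwaite's theorem (quoted in the introduction) that an adequate diagram of an adequate link realizes the minimal crossing number. Since $D$ is an adequate diagram of $L$ we have $c(L) = c(D)$, and since $D^{*}$ is an adequate diagram of $L^{*}$ we have $c(L^{*}) = c(D^{*})$; so the whole statement reduces to the purely diagrammatic bookkeeping identity $c(D^{*}) = c(D) + c(b)$.

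\emph{First} I would recall how $D^{*}$ is built: one removes a small disk neighborhood of the chosen crossing $c$ (contributing one crossing to $D$) and glues in a diagram of the block of rational tangles $b$, which by definition contributes exactly $c(b)$ crossings. All other crossings of $D$ are untouched. Hence $c(D^{*}) = \bigl(c(D) - 1\bigr) + c(b)$ if we count the replaced crossing as lost, but the convention in the statement is evidently that $c(b)$ already includes (or is normalized against) that crossing — more precisely, since the tangle that "extends" $c$ always contains at least the crossing $c$ itself as its first box, the natural count gives $c(D^{*}) = c(D) + c(b)$ where $c(b)$ is the number of \emph{additional} crossings, i.e. crossings of the block beyond the original one. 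I would simply state the counting convention explicitly and then the identity is immediate by inspection of the local picture in Figure~\ref{figure3}.

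\emph{Next} I would assemble the chain of equalities: $c(L^{*}) = c(D^{*})$ (minimality of the adequate diagram $D^{*}$, via Theorem~\ref{new1} and \cite{T1}) $= c(D) + c(b)$ (the local count just described) $= c(L) + c(b)$ (minimality of $D$, again via \cite{T1} since $D$ is adequate). This yields both claimed equalities simultaneously. One should note the mirror-image reduction is harmless here since taking mirror images changes neither crossing numbers nor adequacy.

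\emph{The main obstacle} is not conceptual but notational: one must fix precisely what "the number of crossings of the block of rational tangles that extends $c$" means, because the block replaces an existing crossing rather than being inserted afresh. Once that convention is pinned down (whether $c(b)$ counts the box containing $c$ or only the genuinely new crossings), the proof is a one-line consequence of Theorem~\ref{new1} and the Thistlethwaite minimality result; there is no hard estimate or induction needed.
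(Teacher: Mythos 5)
Your proposal is correct and follows essentially the same route as the paper, whose entire proof is the single observation that an adequate reduced diagram realizes the minimal crossing number (citing \cite{T1}); you simply fill in the chain $c(L^{*}) = c(D^{*}) = c(D) + c(b) = c(L) + c(b)$ explicitly. Your remark about the counting convention for $c(b)$ (whether it absorbs the replaced crossing or counts only the new ones) is a legitimate point that the paper's one-line proof glosses over, but it does not change the argument.
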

\begin{proof}
The result follows as a consequence of the fact that the adequate  reduced diagram of a link has minimal crossing number in \cite{T1}.
\end{proof}
\begin{coro}
A given adequate link yields an infinite family of distinct adequate
links.
\end{coro}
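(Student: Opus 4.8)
The plan is to deduce the corollary from Theorem~\ref{new1} together with Lemma~\ref{help}, using the determinant as the distinguishing invariant. First I would fix an adequate link $L$ with adequate diagram $D$ and pick any crossing $c$ of $D$; by Theorem~\ref{new1}, replacing $c$ by any rational tangle of slope $\frac{\beta}{\alpha}$ whose continued fraction has denominators of the common sign $\sign(c)$ yields an adequate link $L^{*}$. So the family $\{L^{*}\}$ consists entirely of adequate links, and it remains only to exhibit infinitely many \emph{distinct} members. For this I would use Lemma~\ref{help}, which computes $\det(L^{*})$ as $\bigl|\alpha\det(L_{0}) + \sign(xy)\,\beta\det(L_{1})\bigr|$ (up to swapping $L_{0},L_{1}$ according to $\sign(c)$), where $L_{0},L_{1}$ are the two smoothings of $D$ at $c$ and $x,y$ are the fixed integers attached to them before the statement of Lemma~\ref{help}.

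The key step is then to choose an infinite sequence of slopes making these determinants pairwise distinct (or at least taking infinitely many values). If $\det(L_{0})$ and $\det(L_{1})$ are not both zero, I would restrict attention to integer tangles, i.e. slopes $\frac{m}{1}$ with $m$ a positive integer of the appropriate sign; these always extend $c$. Then $\det(L^{*}_{m}) = \bigl|\,m\,\det(L_{1}) + \sign(xy)\det(L_{0})\,\bigr|$ (in the $\sign(c)=1$ case; symmetrically otherwise), which is an affine function of $m$ with nonzero leading coefficient $\det(L_{1})$ provided $\det(L_{1})\neq 0$. Hence it is eventually strictly monotone in $m$, so it takes infinitely many distinct values, giving infinitely many links $L^{*}_{m}$ with distinct determinants, hence pairwise non-isotopic. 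If instead $\det(L_{1}) = 0$ but $\det(L_{0})\neq 0$, one swaps the roles of the two smoothings: pick a crossing and a tangle family that multiplies $\det(L_{0})$ instead, or equivalently use the other branch of Lemma~\ref{help}; since $D$ is adequate it is in particular a nonsplit diagram and $\det(L)\ne 0$, and $\det(L) = |x+y|$-type expression forces not both $\det(L_{0}),\det(L_{1})$ to vanish, so one of the two affine families has nonzero slope.

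The main obstacle I anticipate is exactly this degeneracy bookkeeping: ensuring that at \emph{some} crossing of \emph{some} adequate diagram one of $\det(L_{0}),\det(L_{1})$ is nonzero, and handling the sign $\sign(xy)$ uniformly. I would address it by invoking the spanning-tree correspondence recalled just before Lemma~\ref{help}: the spanning trees of $G(D)$ split into those containing the edge $e$ of $c$ (corresponding to trees of $G(L_{1})$) and those not (corresponding to trees of $G(L_{0})$), so $s_v(D)$ decomposes accordingly and $\det(D)\neq 0$ (true since $D$ is adequate, e.g. by \cite{LT}) forces at least one of $\det(L_{0}),\det(L_{1})$ to be nonzero. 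Once the nonzero branch is identified, the affine growth argument above finishes the proof, and one notes finally that distinct determinants imply distinct links, so the family is genuinely infinite.
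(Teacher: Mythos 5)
Your overall strategy (adequacy from Theorem~\ref{new1}, distinctness from an invariant) is reasonable, but you chose a different invariant than the paper does for this corollary, and that choice introduces a genuine gap. The paper distinguishes the links $L^{*}$ by their \emph{crossing numbers}: the immediately preceding corollary gives $c(L^{*}) = c(L) + c(b)$, because Thistlethwaite's theorem \cite{T1} says an adequate diagram realizes the minimal crossing number; letting the block $b$ run over tangles with more and more crossings then yields infinitely many pairwise distinct adequate links with no case analysis at all. Your determinant argument instead hinges on the assertion that $\det(D)\neq 0$ because $D$ is adequate, ``e.g.\ by \cite{LT}.'' No such statement is in \cite{LT} (that paper concerns the breadth of the Jones polynomial and crossing numbers, not the determinant), and the assertion is false in general: a disjoint union of two reduced alternating diagrams is an adequate diagram, yet the corresponding split link has determinant $0$; moreover both smoothings $L_{0},L_{1}$ at any crossing of such a diagram are still split, so $\det(L_{0})=\det(L_{1})=0$ and Lemma~\ref{help} gives $\det(L^{*})=0$ for every tangle. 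In that situation your whole family has constant determinant and the argument collapses, so the degeneracy you flag is not mere bookkeeping --- it can actually occur, and your proposed escape (swapping the roles of the two smoothings) does not help when both determinants vanish.

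A smaller point: with the paper's conventions an integer tangle $[m]$ has slope $\frac{1}{m}$, i.e.\ $\alpha=m$ and $\beta=1$, so for $\sign(c)=1$ Lemma~\ref{help} gives $\det(L^{*}) = \left|m\det(L_{0}) + \sign(xy)\det(L_{1})\right|$; the roles of $\det(L_{0})$ and $\det(L_{1})$ are the opposite of what you wrote. This is fixable (e.g.\ use tangles $[a_{1},1]$ to make $\det(L_{1})$ the coefficient that grows), but it is one more reason to prefer the paper's crossing-number argument, which is what the determinant argument is reserved for in the homogeneous and alternative cases where no minimal-crossing theorem is available.
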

\begin{proof}
The set of links $\{L^{*}\}$ is infinite as it contains an infinite subset of links of distinct crossing number.  
\end{proof}

Now if we let $\mathfrak{M}\langle D\rangle$ and $\mathfrak{m}\langle D\rangle$
to denote the maximum and minimum powers of $A$ that occur in the Kauffman bracket of the adequate link diagram $D$, then in the following proposition we can give the maximum and minimum powers of $A$ that occur in the Kauffman bracket of the link diagram $D^{*}$ in terms of the corresponding ones of $D$.

\begin{prop}
Let $D^{*}$ be the adequate diagram obtained by replacing a crossing of positive sign in the adequate diagram $D$
by a block of rational tangles that extends it consisting of a product of $l$ tangles each of
which is a sum of $k_{n}$ rational tangles for $1 \leq n \leq l$ with corresponding continued fraction
$[a_{1}^{ij},a_{2}^{ij},\ldots, a_{m_{ij}}^{ij}]$ for $1 \leq i \leq l$ and $1 \leq j \leq k_{i}$, then we have
\[ \mathfrak{M}\langle D^{*}\rangle = \mathfrak{M}\langle D\rangle +
\sum\limits_{i=1}^{l}\sum\limits_{j=1}^{k_{i}}\sum\limits_{r=1}^{m_{ij}} a_{r}^{ij}
+ 2 \sum\limits_{i=1}^{l}\sum\limits_{j=1}^{k_{i}} T_{+}^{ij} + 2 \sum\limits_{n=1}^{l} (k_{n}-1)-1,\]
and
\[
\mathfrak{m}\langle D^{*}\rangle = \mathfrak{m}\langle D\rangle -
\sum\limits_{i=1}^{l}\sum\limits_{j=1}^{k_{i}}\sum\limits_{r=1}^{m_{ij}} a_{r}^{ij}
- 2 \sum\limits_{i=1}^{l}\sum\limits_{j=1}^{k_{i}} T_{-}^{ij} -2l +3,
\]
where $T_{+}^{ij} = \sum\limits_{r=1}^{s}a_{2r-1}^{ij} -\frac{1}{2}((-1)^{m_{ij}-1}+1)$ and
$T_{-}^{ij} = \sum\limits_{r=1}^{s}a_{2r}^{ij} -\frac{1}{2}((-1)^{m_{ij}}+1)$ with $2s-1\leq m_{ij}$ and $2s\leq m_{ij}$. In the case that
the block consists of a sum of $l$ tangles each of which consisting of a product of $k_{n}$ rational tangles
$1 \leq n \leq l$, with corresponding continued fraction
$[a_{1}^{ij},a_{2}^{ij},\ldots, a_{m_{ij}}^{ij}]$ for $1 \leq i \leq l$ and $1 \leq j \leq k_{i}$, then we just interchange the terms $2 \sum\limits_{n=1}^{l} (k_{n}-1)-1$ and $-2l +3$ after changing their signs. 

\end{prop}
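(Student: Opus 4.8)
The plan is to reduce the statement to the standard description of the two extreme powers of $A$ in the Kauffman bracket of an adequate diagram, and then to carry out a circle count in the all-$A$ and all-$B$ states of the block.

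I would first recall that for any adequate diagram $E$ with $c(E)$ crossings the top power of $A$ in $\langle E\rangle$ is contributed only by the all-$A$ state and does not cancel, while the bottom power is contributed only by the all-$B$ state and does not cancel; hence
\[
\mathfrak{M}\langle E\rangle = c(E) + 2|s_{A}E| - 2, \qquad
\mathfrak{m}\langle E\rangle = -c(E) - 2|s_{B}E| + 2 .
\]
By Theorem \ref{new1} the diagram $D^{*}$ is adequate, so both identities apply to $D^{*}$ as well as to $D$; subtracting, the proposition becomes equivalent to the three combinatorial equalities
\[
c(D^{*}) - c(D) = \sum_{i,j,r} a_{r}^{ij} - 1,
\]
\[
|s_{A}D^{*}| - |s_{A}D| = \sum_{i,j} T_{+}^{ij} + \sum_{n=1}^{l}(k_{n}-1), \qquad
|s_{B}D^{*}| - |s_{B}D| = \sum_{i,j} T_{-}^{ij} + (l-1) .
\]
The first is immediate: by Lemma \ref{properties}(2) I may assume every continued fraction $[a_{1}^{ij},\dots,a_{m_{ij}}^{ij}]$ has positive denominators, so the block carries exactly $\sum_{i,j,r} a_{r}^{ij}$ crossings and is spliced in in place of the single crossing $c$.

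The two circle counts are the substance of the argument, and the step I expect to be the real obstacle. With the smoothing conventions fixed so that the all-$A$ state opens the odd-numbered twist boxes of a rational tangle extending $c$ (a box of $a$ crossings then producing $a-1$ nested simple closed curves) and leaves the even-numbered boxes closed, while the all-$B$ state does the reverse, a direct inspection of Figure \ref{figure3} shows that the number of curves produced inside the $(i,j)$-th rational tangle is $T_{+}^{ij}$ in the all-$A$ state and $T_{-}^{ij}$ in the all-$B$ state — the terms $-\frac{1}{2}((-1)^{m_{ij}-1}+1)$ and $-\frac{1}{2}((-1)^{m_{ij}}+1)$ being precisely the adjustments forced by the last box according to the parity of $m_{ij}$. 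Next I would assemble the block: inside a \emph{sum} of $k_{n}$ rational tangles the $k_{n}-1$ consecutive junctions each create one further curve in the all-$A$ state (giving the term $\sum_{n}(k_{n}-1)$) and none in the all-$B$ state, while inside the \emph{product} of the $l$ rows the $l-1$ junctions behave dually, each creating one curve in the all-$B$ state (giving the term $l-1$) and none in the all-$A$ state. Finally, because the block extends $c$, in each of the two states it presents at the site of $c$ the same planar tangle type as the corresponding smoothing of $c$ does, so the splice of the block into $D$ neither gains nor loses a state curve, and the two displayed equalities follow. Feeding them and the crossing count into the adequate-diagram formulas yields the stated expressions; in particular the $-1$ in $\mathfrak{M}\langle D^{*}\rangle$ comes from the crossing count, and the $-2l+3$ in $\mathfrak{m}\langle D^{*}\rangle$ comes from the $l-1$ product junctions together with the crossing count.

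Finally, I would observe that the last assertion — for a block that is a \emph{sum} of $l$ rational tangles each of which is a \emph{product} of $k_{n}$ rational tangles — follows from the identical analysis with the roles of ``sum'' and ``product'' interchanged; since that interchange is the same as swapping the all-$A$ and all-$B$ states throughout, it exchanges the contributions $2\sum_{n}(k_{n}-1)-1$ and $-2l+3$ after the stated sign change. The hard part is the bookkeeping of the middle paragraph: one must pin down the smoothing conventions for horizontal and vertical twist boxes of each sign, decide which parity of boxes is active in each state, and then count correctly the curves created (or not) at the three kinds of junction — box to box inside a rational tangle, tangle to tangle inside a sum, and row to row inside a product — and at the splice into $D$. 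Every $\pm 1$ and $\pm 2$ in the statement is one of these corrections, and keeping their signs straight is the delicate point; the adequacy of $D$ and of $D^{*}$ enters only through the two extreme-power formulas.
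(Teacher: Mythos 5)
Your proposal follows the same route as the paper: establish the circle counts $|s_{A}D^{*}|-|s_{A}D|=\sum_{i,j}T_{+}^{ij}+\sum_{n}(k_{n}-1)$ and $|s_{B}D^{*}|-|s_{B}D|=\sum_{i,j}T_{-}^{ij}+(l-1)$, then feed them and the crossing count into the extreme-power formulas for adequate diagrams from \cite[Proposition\,1]{LT}; your arithmetic reproduces the stated expressions exactly. The paper simply asserts these circle counts as ``not too hard to prove,'' so your additional bookkeeping discussion is, if anything, more explicit than the original.
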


\begin{proof}
It is not to hard to prove the following facts
$s_{A}D^{*} = s_{A}D + \sum\limits_{i=1}^{l}\sum\limits_{j=1}^{k_{l}} T_{+}^{ij} + \sum\limits_{n=1}^{l}(k_{n} - 1),$
and $s_{B}D^{*} = s_{B}D + \sum\limits_{i=1}^{l}\sum\limits_{j=1}^{k_{l}} T_{-}^{ij}+ (l-1)$ in the first case and $s_{A}D^{*} = s_{A}D + \sum\limits_{i=1}^{l}\sum\limits_{j=1}^{k_{l}} T_{+}^{ij} + (l-1),$
and $s_{B}D^{*} = s_{B}D + \sum\limits_{i=1}^{l}\sum\limits_{j=1}^{k_{l}} T_{-}^{ij}+ \sum\limits_{n=1}^{l}(k_{n}-1)$ in the second case. Now the result follows using the formulas for the maximum and minimum powers of $A$ that occur in the Kauffman bracket of any adequate link diagram $D$ given in \cite[Proposition\,1]{LT}.
\end{proof}

\begin{rem}
A similar result  can be obtained in the case that the crossing is of negative sign if we work with the mirror image
of the link diagram. Note that the extreme powers of the Kauffman bracket of the link diagram are the
opposite of the extreme powers of the Kauffman bracket of the mirror image of the link diagram and
vice versa.
\end{rem}
\begin{ex}
It is easy to see that the pretzel link diagram
$D(\underset{r-times}{\underbrace{2,2,\ldots,2}};\underset{s-times}{\underbrace{-2,-2,\ldots,-2}})$
with $r, s\geq 2$ is adequate (see the only example in \cite[Page\,529]{LT}). Here 2, and -2 denote the number of
positive and negative crossings, respectively. Therefore, the Montesinos link diagram $M((\alpha_{1}, \beta_{1}),
(\alpha_{2}, \beta_{2}), \ldots ,(\alpha_{r}, \beta_{r});(\gamma_{1}, \delta_{1}),
(\gamma_{2}, \delta_{2}), \ldots ,(\gamma_{s}, \delta_{s}))$ with $r, s\geq 2$ is adequate according
to the above result. In this notation,
$\alpha_{i}, \beta_{i}$ are coprime positive integers with $\alpha_{i}
> 1$ for $1\leq i \leq r$ and $\gamma_{i}, \delta_{i}$ are coprime integers with $\frac{\gamma_{i}}{\delta_{i}}< 0$
and $|\gamma_{i}|> 1$ for $1\leq i\leq s$. This confirms the result of \cite[Section\,4]{LT}.

\end{ex}

\begin{thm}\label{new2}
Let $L$ be a homogeneous link with a homogeneous diagram $D$. If $L^{*}$ is the link obtained from $D$ by replacing any oriented crossing $c$ by a block of oriented rational tangles that extends it, then $L^{*}$ is homogeneous with homogeneous diagram $D^{*}$.
\end{thm}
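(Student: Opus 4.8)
\textbf{Proof proposal for Theorem \ref{new2}.}
The plan is to mirror the strategy used in the proof of Theorem \ref{new1}, reducing the general block of oriented rational tangles to the case of a single integer tangle and then invoking the inductive passage through continued fractions via Lemma \ref{properties}(3). First I would fix an oriented crossing $c$ of the homogeneous diagram $D$ and examine how the Seifert graph changes when $c$ is replaced. The key observation is that the Seifert circle structure near $c$ depends only on whether $c$ is a Type I or Type II crossing in the sense of Figure \ref{figure2}, and the edge of the Seifert graph $G(D)$ corresponding to $c$ carries a sign determined by that type. The hypothesis that the oriented rational tangle \emph{extends} the oriented crossing $c$ is exactly what guarantees that each crossing in an odd box is of the same type as $c$ and each crossing in an even box is of the opposite type; I would check that this is precisely the condition making the inserted tangle ``compatible'' with the Seifert algorithm, i.e. so that the Seifert graph of $D^{*}$ is obtained from $G(D)$ by a controlled local substitution.

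The central computation is to describe $G(D^{*})$ in terms of $G(D)$ when $c$ is replaced by an integer tangle of $m$ crossings that extends it. Here the relevant fact about the continued fraction convention (the ``$i$-th box'' rule recalled before Figure \ref{figure3}) is that consecutive boxes alternate between horizontal and vertical crossings; under the Seifert algorithm, a string of $m$ parallel crossings of the same type either (a) replaces the edge $e$ of $G(D)$ by $m$ edges in series (a path), or (b) replaces $e$ by $m$ parallel edges, depending on the type and on whether the crossings are horizontal or vertical relative to the orientation. In either case all the new edges receive the \emph{same} sign, namely the sign of $e$, because the tangle extends $c$. The next step is to verify that the block decomposition behaves well: replacing an edge by a series path can only subdivide a block or merge it with nothing new, and replacing an edge by parallel edges keeps those edges inside the same block; in both cases every new edge lies in a block all of whose edges already had one common sign, so homogeneity of $G(D)$ is preserved in $G(D^{*})$. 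I would then run the induction on the number of denominators exactly as in Theorem \ref{new1}: the rational tangle $[a_{1},\ldots,a_{n-1},\pm 1]$ is isotopic to $[a_{1},\ldots,a_{n-1}\pm 1]$ by Lemma \ref{properties}(3), and the resulting diagrams have homogeneous Seifert graphs simultaneously, so the claim for $n$ denominators follows from the claim for $n-1$ together with the base case. Finally, iterating the single-tangle case over the sums and products that make up the block yields the full statement, since sums and products of tangles correspond to elementary amalgamations of the pieces of the Seifert graph at cut vertices, which cannot destroy homogeneity.

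The main obstacle I anticipate is the bookkeeping of orientations: unlike the unoriented situation of Theorem \ref{new1}, here one must track how the Seifert circles are connected through the tangle, and in particular confirm that when a box contributes ``vertical'' crossings the two strands of the tangle are oriented coherently so that the Seifert algorithm produces the expected string of edges rather than creating a separate Seifert circle that would introduce a new vertex into a block with mixed signs. The definition of an oriented rational tangle extending $c$ is designed to rule this out, but the verification requires carefully comparing the orientation of the closure strands at NW, NE, SW, SE with the orientation of the two arcs at $c$, box by box. Once that compatibility is established the sign of every new edge is forced to equal the sign of $e$, and homogeneity is immediate; so the real content of the proof is this orientation-compatibility check, after which the structural argument about blocks and the continued-fraction induction proceed just as in the adequate case.
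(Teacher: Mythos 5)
Your proposal follows essentially the same route as the paper's proof: reduce to the case of an integer tangle (where the Seifert graph of $D^{*}$ is that of $D$ with the edge for $c$ replaced by $m$ parallel or collinear edges of the same sign, so homogeneity is preserved), then induct on the number of denominators via the isotopy $[a_{1},\ldots,a_{n-1},\pm 1]\simeq[a_{1},\ldots,a_{n-1}\pm 1]$ from Lemma \ref{properties}. Your additional attention to the orientation-compatibility check and to how sums and products of tangles interact with the block decomposition supplies detail the paper leaves implicit, but it is the same argument.
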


\begin{proof}

Let $c$ be any oriented crossing of the homogeneous diagram $D$.  By taking the mirror image if necessary, we may assume that $c$ corresponds to a positive edge in the Seifert graph according to the scheme in Figure \ref{figure2} and later use the fact that the diagram is homogeneous iff its mirror image is homogeneous to obtain the required result.

We prove the result first in the case when the oriented crossing is
replaced by an oriented integer tangle that extends $c$. Let $D^{m}$ be the link
diagram obtained from $D$ after replacing the oriented crossing $c$ by an oriented integer tangle of $m$ vertical or horizontal crossings that extends it.
We show that $D^{m}$ is homogeneous if $D$ is homogeneous. According to orientation, the Seifert graph $G_{m}$
corresponding to $D^{m}$ is exactly the Seifert graph $G$
corresponding to $D$ where the edge that corresponds to $c$ is replaced
by $m$ parallel edges or $m$ collinear edges of the same
sign. Therefore, $G_{m}$ is homogeneous if $G$ is homogeneous.

Now we prove that any oriented crossing of this vertical or horizontal integer tangle
can be replaced by an oriented rational tangle that extends it and at the same time extends the
oriented crossing $c$. For this end, let $[a_{1},a_{2},\ldots, a_{n}]$ be the continued fraction of
some oriented rational tangle that extends $c$. We use induction on the number of denominators of the
continued fraction to prove our claim. The above argument implies the result for $n=1$.
Now the oriented rational tangle of corresponding continued fraction $[a_{1},a_{2},\ldots, a_{n-1}+1]$
is isotopic to the oriented rational tangle of corresponding continued fraction $[a_{1},a_{2},\ldots, a_{n-1}, 1]$
from the fourth part of Lemma \ref{properties}. Now it is not too hard to see 
that the diagram obtained by replacing the crossing $c$ by the rational tangle of continued fraction $[a_{1},a_{2},\ldots, a_{n-1}+1]$ is homogeneous iff the diagram obtained by replacing the crossing $c$ by the rational tangle of continued fraction $[a_{1},a_{2},\ldots, a_{n-1},1]$ is homogeneous. Finally, the result follows using the induction hypothesis
and the case for $n=1$.

\end{proof}
\begin{coro}
The diagram $D$ is homogeneous iff $D^{*}$ is homogeneous.
\end{coro}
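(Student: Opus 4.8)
The forward implication, that $D^{*}$ is homogeneous whenever $D$ is, is exactly Theorem \ref{new2}, so the plan is to establish the converse: if $D^{*}$ is homogeneous then so is $D$. As in the proof of Theorem \ref{new2}, I would first pass to the mirror image if necessary so that the crossing $c$ being replaced corresponds to a positive edge of the Seifert graph, using that a diagram is homogeneous precisely when its mirror image is. Write $G$ for the Seifert graph of $D$, let $e$ be the edge of $G$ corresponding to $c$, and write $G^{*}$ for the Seifert graph of $D^{*}$.

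The key structural point I would extract from the proof of Theorem \ref{new2} is that $G^{*}$ is obtained from $G$ by deleting $e$ and gluing in its place a connected planar graph $H$, attached to $G\setminus e$ only at the two endpoints of $e$, where $H$ is built from single edges by the series and parallel operations corresponding to the products and sums of rational tangles in the block, and, crucially, every edge of $H$ carries the same sign as $e$. This last fact is precisely what the hypothesis that the block of oriented rational tangles extends the oriented crossing $c$ guarantees, through the type I / type II bookkeeping in the definition together with the isotopy statement of Lemma \ref{properties}; it is already used in the homogeneity direction of Theorem \ref{new2}, namely in the integer-tangle case (where $H$ is a bundle of parallel, or a chain of series, same-sign edges) and in the inductive reduction along the continued fraction.

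Given this, I would prove the purely graph-theoretic claim that $G^{*}$ is homogeneous if and only if $G$ is, by analysing the blocks of $G^{*}$. A block of $G^{*}$ is of one of three types: a block of $G$ not meeting $e$, which is unaffected by the surgery; a sub-block lying entirely inside $H$; or the block $B^{*}=(B\setminus e)\cup H$, where $B$ is the unique block of $G$ containing $e$ (this last type occurs as a single block when $B$ has at least two edges, and degenerates into the blocks of $H$ when $B=\{e\}$). Blocks of the first type contribute identically to the homogeneity of $G$ and of $G^{*}$; blocks of the second type are monochromatic, since all edges of $H$ share the sign of $e$, so they never violate homogeneity; and $B^{*}$ is monochromatic exactly when $B$ is, because replacing $e$ by the sign-$e$ gadget $H$ changes neither the set of signs occurring in that block nor, in the boundary case where $e$ was the only edge of its sign in $B$, the fact that that sign still occurs. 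Combining the three cases shows that $G^{*}$ is homogeneous iff $G$ is, which is the desired equivalence.

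The step I expect to be the main obstacle is the verification that the surgery does not split the block containing $e$, that is, that $B^{*}$ is $2$-connected whenever $B$ has at least two edges, even though the gadget $H$ may itself have internal cut vertices (as happens for a product of rational tangles). The point is that these internal cut vertices of $H$ are neutralised by the alternative $u$-$v$ path through $B\setminus e$, which is connected since $B$ is $2$-connected; making this precise, together with a careful treatment of the degenerate cases where $B$ is a single edge or $e$ is the unique edge of its sign in $B$, is the only place where more than routine bookkeeping is needed.
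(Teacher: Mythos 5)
Your overall strategy coincides with the paper's: the forward direction is Theorem \ref{new2}, and the equivalence is reduced to the statement that the Seifert graph of $D$ is homogeneous iff the Seifert graph of $D^{*}$ is. The paper's own proof consists of exactly that one-sentence reduction and nothing more, so your block-by-block analysis of $G^{*}$ is a genuine elaboration of an argument the paper only asserts; the three-way classification of blocks and the observation that internal cut vertices of the gadget on a $u$--$v$ path are neutralised by the alternative path through $B\setminus e$ are the right ingredients.

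There is, however, a concrete problem with the structural claim on which your case analysis rests: that \emph{every} edge of the gadget $H$ carries the same sign as $e$. The paper's definition of an oriented rational tangle extending the oriented crossing $c$ requires the crossings in the $i$-th box to be of the \emph{same} type as $c$ (in the sense of Figure \ref{figure2}) when $i$ is odd and of the \emph{opposite} type when $i$ is even, and the sign of a Seifert-graph edge is precisely the type of its crossing. So for any continued fraction with $n\geq 2$ denominators the gadget $H$ contains edges of both signs, and it is monochromatic only in the integer-tangle case $n=1$ (which is the only case in which the proof of Theorem \ref{new2} explicitly describes the Seifert graph). Consequently your second case (``blocks lying inside $H$ are monochromatic because all edges of $H$ share the sign of $e$'') and your third case (``$B^{*}$ is monochromatic exactly when $B$ is'') are not justified as written: you must instead show that the opposite-sign edges coming from the even-indexed boxes are separated by cut vertices both from $B\setminus e$ and from the same-sign edges of $H$, so that they form their own monochromatic blocks, and that only same-sign edges of $H$ are absorbed into the block containing $B\setminus e$. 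This is plausible and is implicitly what makes Theorem \ref{new2} itself work, but it requires tracking the actual Seifert circles created inside the tangle through the inductive reduction $[a_{1},\ldots,a_{n-1},1]\leftrightarrow[a_{1},\ldots,a_{n-1}+1]$, and it is the missing step in your argument rather than the $2$-connectivity issue you flag as the main obstacle.
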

\begin{proof}
The result follows directly since the Seifert graph of $D$ is homogenous iff the Seifert graph of $D^{*}$ is homogenous.
\end{proof}
\begin{coro}
A given homogeneous link yields an infinite family of distinct
homogeneous links.
\end{coro}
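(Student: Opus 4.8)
The plan is to feed Theorem~\ref{new2} into an invariant that is computable from any homogeneous diagram, and the natural such invariant here is the Seifert genus. By Theorem~\ref{new2}, replacing a fixed oriented crossing $c$ of the homogeneous diagram $D$ by any block of oriented rational tangles that extends it yields a homogeneous link $L^{*}$ with homogeneous diagram $D^{*}$; what remains is to isolate, inside the resulting family $\{L^{*}\}$, infinitely many pairwise non-isotopic links. Cromwell proved in \cite{C} that Seifert's algorithm applied to a homogeneous diagram produces a minimal genus spanning surface, so $g(L^{*})$ equals the genus of the surface coming from $D^{*}$, namely $\frac{1}{2}\bigl(b_{1}(G^{*}) + 1 - \mu(L^{*})\bigr)$, where $G^{*}$ is the Seifert graph of $D^{*}$ and $\mu$ is the number of components. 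Hence it suffices to produce, among the $L^{*}$, links whose Seifert graphs have arbitrarily large first Betti number while the number of components stays bounded.

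First I would fix a crossing $c$ and examine the local picture around it: the two strands meeting at $c$ are, after Seifert smoothing, either coherently or incoherently oriented, and this dichotomy is exactly the Type~I versus Type~II distinction of Figure~\ref{figure2}. Replacing $c$ by a twist region of $n$ crossings of the sign and type of $c$---an integer tangle, hence a one-tangle block that extends $c$---replaces the edge of $G$ at $c$ either by $n$ parallel edges (the coherent, Type~I, case) or by a path of $n$ edges in series (the incoherent, Type~II, case). In the first case these integer tangles already suffice: $b_{1}(G^{*}) = b_{1}(G) + n - 1$, the number of components changes by at most one as $n$ varies, and so $g(L^{*}) \to \infty$. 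In the second case the integer tangles leave $b_{1}$ fixed, so instead I would replace $c$ by the oriented rational tangle of continued fraction $[\varepsilon,\varepsilon n]$ with $\varepsilon = \sign c$: its first box is a single crossing of $c$'s type and its second box is a twist region of $n$ crossings of the opposite type---which is now the coherent type---contributing $n$ parallel edges to the Seifert graph. One checks from the two definitions of this section that such a tangle extends the oriented crossing $c$, so Theorem~\ref{new2} applies, and once more $b_{1}(G^{*})$ grows linearly in $n$ with $\mu$ bounded, forcing $g(L^{*}) \to \infty$. Since the genus is a link invariant, infinitely many of the $L^{*}$ are pairwise non-isotopic, which gives the claimed infinite family.

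An alternative, when available, is the determinant: if either $\det(L_{0})$ or $\det(L_{1})$ is nonzero for the chosen crossing, then by Lemma~\ref{help} replacing $c$ by the rational tangle of slope $\frac{1}{n}$ (respectively $\frac{n}{1}$) yields links whose determinants are unbounded in $n$, so again infinitely many are distinct; but this argument collapses in the degenerate case $\det(L_{0}) = \det(L_{1}) = 0$, which is why the genus route is preferable. The step I expect to be the main obstacle is precisely this coherent-versus-incoherent analysis at $c$: one must verify that the integer tangle, or the length-two continued fraction $[\varepsilon,\varepsilon n]$, selected as above genuinely extends the oriented crossing $c$ in the sense required by Theorem~\ref{new2}, and that it is the twist region which adds parallel rather than in-series edges to $G$. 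Once the local configuration at $c$ has been matched against Figure~\ref{figure2}, the Betti-number bookkeeping and the boundedness of $\mu$ are routine.
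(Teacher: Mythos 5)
Your argument is correct in outline, but it takes a genuinely different route from the paper. The paper's own proof is a one-line appeal to the determinant: by Lemma~\ref{help}, replacing $c$ by rational tangles of varying slope $\frac{\beta}{\alpha}$ produces the values $\left|\alpha \det(L_{0}) + \sign(xy)\beta\det(L_{1})\right|$, which are unbounded, so infinitely many of the $L^{*}$ are distinct. You instead run the family through the Seifert genus, using Cromwell's theorem from \cite{C} that a homogeneous diagram realizes the genus, and show $b_{1}(G^{*})$ grows while $\mu$ stays bounded. The trade-off is instructive: your route costs more work --- in particular you must verify the coherent/incoherent dichotomy at $c$ and, in the incoherent case, that the tangle $[\varepsilon,\varepsilon n]$ really admits an orientation extending that of $c$ (the paper explicitly warns that not every tangle extending $c$ does, so this is not automatic and is the one point you should nail down rather than defer) --- but it buys robustness: as you correctly observe, the determinant argument degenerates when $\det(L_{0})=\det(L_{1})=0$ (equivalently $x=y=0$, forcing $\det(L)=0$, as happens for instance at every crossing of a split homogeneous diagram), and in that case the paper's proof as written establishes nothing, whereas the genus still separates the links. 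So your proposal is not only a valid alternative but actually patches a gap in the paper's argument; the paper's proof is the one to use when $\det(L)\neq 0$ because it is immediate from Lemma~\ref{help}.
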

\begin{proof}
The set of links $\{L^{*}\}$ is infinite as it contains an infinite subset of links of distinct determinants as a result of Lemma \ref{help}.
\end{proof}
As a direct consequence of the fact that the class of positive links is a subclass of 
the class of homogeneous links, we obtain the following:
\begin{coro}
Let $L$ be a positive link with a positive 
link diagram $D$. If $L^{*}$ is the link obtained from $D$ by replacing
any oriented crossing $c$ by block of oriented rational tangles that extends
it, then $L^{*}$ is positive link  with positive link diagram
$D^{*}$.
\end{coro}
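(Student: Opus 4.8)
The plan is to deduce this from Theorem \ref{new2} together with a closer look at the construction used in its proof. First I would record the standard fact that a positive link diagram is homogeneous: in the Seifert graph of a diagram all of whose crossings are positive, every edge carries the positive sign, so each block, being a subgraph, has all of its edges of the same sign, and the Seifert graph is homogeneous. Hence $D$ is a homogeneous diagram, Theorem \ref{new2} applies, and $D^{*}$ is a homogeneous diagram of a homogeneous link $L^{*}$.

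It then remains to upgrade ``homogeneous'' to ``positive'' for $D^{*}$, that is, to check that every crossing of $D^{*}$ is positive. The crossings of $D^{*}$ fall into two groups: those inherited unchanged from $D$, which are positive because $D$ is, and those lying in the block of oriented rational tangles that replaces the chosen oriented crossing $c$. For the latter I would argue exactly as in the proof of Theorem \ref{new2}, first treating an oriented integer tangle and then inducting on the number of denominators. When $c$ is replaced by an oriented integer tangle of $m$ vertical or horizontal crossings that extends it, each of these $m$ crossings has the same type and the same orientation as $c$; since $c$ is positive, so are all of them, or equivalently, the edge of the Seifert graph corresponding to $c$ is replaced by $m$ parallel or $m$ collinear edges all carrying the positive sign, precisely as in the proof of Theorem \ref{new2}. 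For a general oriented rational tangle that extends $c$, I would use the last part of Lemma \ref{properties} to pass between the continued fractions $[a_{1},\ldots,a_{n-1}+1]$ and $[a_{1},\ldots,a_{n-1},1]$, observing that this isotopy changes neither the multiset of signs of the crossings nor the property of being a positive diagram; the base case $n=1$ is the integer-tangle case just handled, so the result for all $n$, and then for a block obtained as a finite sum and/or product of such tangles, follows by induction. Consequently no negative crossing is ever introduced, $D^{*}$ is a positive diagram, and $L^{*}$ is a positive link.

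The step I expect to be the main obstacle is the verification that every crossing produced inside an extending oriented rational tangle is positive, in particular that the even-indexed boxes, which by definition carry crossings of the type opposite to that of $c$, nonetheless contribute crossings of the same (positive) sign once the tangle is oriented so that its orientation extends that of $c$. Concretely this amounts to checking that, in such a tangle, the horizontal/vertical alternation of the boxes together with the alternation of crossing type cancels at the level of the induced Seifert-graph sign, so that all of the corresponding edges, hence all of the corresponding crossings, share the sign of the edge of $c$. Once this bookkeeping is in place, the corollary follows at once from Theorem \ref{new2}.
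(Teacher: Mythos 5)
Your proposal correctly identifies that the paper's stated justification (positive links form a subclass of homogeneous links, so Theorem \ref{new2} applies) only yields that $D^{*}$ is homogeneous, and that one must separately check that every crossing of $D^{*}$ is positive; the paper itself offers nothing beyond that one-sentence subclass remark, so you have in fact gone further than the text does. Your treatment of the integer-tangle case ($n=1$) is sound: all $m$ new crossings are of the same type as $c$, hence positive, and the Seifert graph acquires $m$ parallel or collinear edges of the same (positive) sign.

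However, the step you flag as ``the main obstacle'' is a genuine gap that cannot be closed in the way you hope. By the paper's own definition of an oriented rational tangle extending $c$, the crossings in the even-indexed boxes are of the \emph{opposite} type to $c$ in the sense of Figure \ref{figure2}; but the type in Figure \ref{figure2} \emph{is} the oriented sign of the crossing --- it is exactly the datum used to sign the edges of the Seifert graph --- so there is no further cancellation available between the horizontal/vertical alternation of the boxes and the alternation of type. Those crossings are simply negative, and $D^{*}$ fails to be a positive diagram as soon as the block contains a rational tangle with $n\geq 2$ denominators. A concrete check: the oriented tangle with continued fraction $[2,2]$, whose numerator closure is the figure-eight knot, has two positive and two negative crossings, and the figure-eight knot is homogeneous but not positive. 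Consequently your argument establishes the corollary only when every rational tangle in the block is an integer tangle; for $n\geq 2$ one could at best hope that the link $L^{*}$ is positive via some other diagram, which would require an entirely different argument and is not supplied by Theorem \ref{new2} or by the paper.
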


\begin{thm}\label{new3}
Let $L$ be an alternative link with a alternative diagram $D$. If link $L^{*}$ is the link obtained from $D$ by replacing any oriented crossing $c$ by a block of oriented rational tangles that extends it, then $L^{*}$ is alternative
with alternative diagram $D^{*}$.
\end{thm}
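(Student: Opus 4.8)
The plan is to follow exactly the template established in the proofs of Theorems~\ref{new1} and~\ref{new2}, replacing the combinatorial invariant used there (number of state circles for adequacy, block signs of the Seifert graph for homogeneity) by the enhanced checkerboard graph $\Phi(D)$ and the ``no mixed walk'' condition of \cite[Theorem\,19]{Si}. First I would reduce to the case of a single oriented crossing $c$ of positive sign, using the fact that a diagram is alternative if and only if its mirror image is (the mirror reverses all edge signs in $\Phi(D)$, which visibly preserves the absence of walks containing both a positive and a negative edge). Since a block of oriented rational tangles extends $c$ only if each constituent rational tangle does, and a rational tangle is built from integer tangles via sums and products, it suffices to treat two elementary moves: (i) replacing $c$ by an oriented integer tangle of $m$ crossings (vertical or horizontal) that extends it; and (ii) the isotopy of Lemma~\ref{properties}(3) that turns a trailing $\pm1$ denominator into an increment of the previous denominator.

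The key step is (i). I would analyze how $\Phi(D)$ changes when $c$ is replaced by $m$ parallel (resp.\ $m$ in-series) crossings. By the construction recalled before Figure~\ref{figure5}, the crossing $c$ contributes a single directed signed edge $e$ of $\Phi(D)$ between the two vertices sitting in the two regions of one colour meeting at $c$. Replacing $c$ by an oriented integer tangle that extends it inserts, according to the scheme in Figure~\ref{figure2} together with the extension definition, either $m$ edges in parallel between the same two vertices, all carrying the same sign and the same direction as $e$, or a chain of $m$ edges through $m-1$ new valence-two vertices, again all of the same sign and coherently directed as $e$. In either case, every directed walk in the new graph $\Phi(D^{m})$ is obtained from a directed walk in $\Phi(D)$ by substituting, for each traversal of $e$, a traversal of one of the new same-sign edges (or of the whole same-sign chain), and conversely every walk of $\Phi(D^{m})$ projects to a walk of $\Phi(D)$ by collapsing the inserted edges/vertices. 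This projection and lift preserve the set of signs appearing along the walk, so $\Phi(D^{m})$ contains a mixed walk if and only if $\Phi(D)$ does; hence $D^{m}$ is alternative iff $D$ is. I would then run the induction on the number of denominators $n$ of the continued fraction exactly as in Theorem~\ref{new1}: the $n=1$ case is step (i) applied once, and step (ii), being an isotopy of the diagram, changes neither the link nor — up to the obvious identification — the property of being alternative, so $[a_1,\dots,a_{n-1}+1]$ and $[a_1,\dots,a_{n-1},1]$ give alternative diagrams simultaneously; combining with the inductive hypothesis yields the result for all rational tangles that extend $c$, and then for blocks thereof by iterating over the sum/product structure.

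The main obstacle I expect is bookkeeping the directions and signs of the inserted edges of $\Phi(D)$ precisely enough to justify the ``walk lifts and projects with the same sign set'' claim — in particular checking that the alternation between Type~I and Type~II crossings forced on even-indexed boxes by the definition of an oriented rational tangle extending $c$, together with the horizontal/vertical alternation inherent in the continued-fraction picture of Figure~\ref{figure3}, conspires so that all the new edges coming from a single box are coherently signed and directed relative to $e$. This is the analogue of the ``same sign'' verification for parallel/collinear Seifert-graph edges in the proof of Theorem~\ref{new2}, and I anticipate it being a short but slightly fiddly case check over the four local pictures in Figures~\ref{figure2} and~\ref{figure5}; once it is done, the walk argument and the induction are essentially formal. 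I would close with the two expected corollaries — that $D$ is alternative iff $D^{*}$ is, and that a given alternative link spawns an infinite family of distinct alternative links (distinguished, as before, by crossing number via Theorem~\ref{new1}'s corollary when the block can be chosen adequate, or otherwise by determinant via Lemma~\ref{help}).
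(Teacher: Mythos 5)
Your proposal follows essentially the same route as the paper's proof: reduce to a positive crossing via the mirror image, show that replacing $c$ by an oriented integer tangle that extends it replaces the corresponding edge of the enhanced checkerboard graph by $m$ parallel or collinear edges of the same sign (hence preserves the no-mixed-walk condition), and then induct on the number of denominators using the isotopy $[a_{1},\ldots,a_{n-1}+1]\simeq[a_{1},\ldots,a_{n-1},1]$ of Lemma~\ref{properties}. Your walk lifting/projection argument in fact supplies more detail than the paper's ``it is not too hard to see,'' but the strategy is identical.
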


\begin{proof}
Let $c$ be any oriented crossing of the alternative diagram $D$.  By taking the mirror image if necessary, we can assume that $c$ corresponds to a positive edge in the enhanced checkerboard graph according to the scheme in Figure \ref{figure5} and later use the fact that the diagram is alternative iff its mirror image is alternative to obtain the required result.

We prove the result first in the case when the crossing is
replaced by an oriented integer tangle that extends $c$. Let $D^{m}$ be the link
diagram obtained from $D$ after replacing the oriented $c$ by an oriented vertical or horizontal oriented integer tangle of $m$ crossings that extends it.
According to orientation, the enhanced signed graph $\Phi(G_{m})$
corresponding to $D^{m}$ is exactly the enhanced signed graph $\Phi(G)$
corresponding to $D$ where the edge corresponds to $c$ is replaced
by $m$ parallel edges or $m$ collinear edges of the same
sign. Therefore, $\Phi(G_{m})$ is alternative if $\Phi(G)$ is alternative.

Now we prove that any oriented crossing of this vertical or horizontal integer tangle
can be replaced by an oriented rational tangle that extends it and at the same time extends the oriented crossing $c$. For this end, let $[a_{1},a_{2},\ldots, a_{n}]$
be the continued fraction of some oriented rational tangle that extends $c$.
We use induction on the number of denominators of the continued fraction to prove our claim.
The above argument implies the result for $n=1$. Now the oriented rational tangle of corresponding
continued fraction $[a_{1},a_{2},\ldots, a_{n-1}+1]$ is isotopic to the oriented rational tangle
of corresponding continued fraction $[a_{1},a_{2},\ldots, a_{n-1}, 1]$
from the fourth part of Lemma \ref{properties}. Now it is not too hard to see 
that the diagram obtained by replacing the crossing $c$ by the rational tangle of continued fraction $[a_{1},a_{2},\ldots, a_{n-1}+1]$ is alternative iff the diagram obtained by replacing the crossing $c$ by the rational tangle of continued fraction $[a_{1},a_{2},\ldots, a_{n-1},1]$ is alternative. Finally, the result follows using the induction hypothesis
and the case for $n=1$.

\end{proof}
\begin{coro}
The diagram $D$ is alternative iff $D^{*}$ is alternative.
\end{coro}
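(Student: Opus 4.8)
The plan is to reduce the statement to a purely graph-theoretic fact about the enhanced checkerboard graph. By the proof of Theorem~\ref{new3}, the graph $\Phi(D^{*})$ is obtained from $\Phi(D)$ by a finite sequence of elementary moves, each of which replaces a single directed edge $e$ — carrying the sign $\varepsilon$ of the crossing it records — either by $m$ parallel directed edges of sign $\varepsilon$ with the same head and tail as $e$, or by a directed path of $m$ collinear edges of sign $\varepsilon$ subdividing $e$; the passage from an arbitrary block of oriented rational tangles down to iterated integer tangles is exactly the induction already carried out there, together with part~(3) of Lemma~\ref{properties}. So it suffices to show that each such elementary move turns an alternative signed planar digraph into an alternative one \emph{and}, conversely, that if the result is alternative then so was the original.

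The forward implication is precisely Theorem~\ref{new3} read on the level of diagrams. For the converse I would argue by contraposition: suppose the graph $G$ before the move is not alternative, so it contains a directed walk $W$ using both a positively signed and a negatively signed edge. If $W$ never traverses the distinguished edge $e$, then $W$ is already a walk in the modified graph $G'$ and we are done. Otherwise, replace each traversal of $e$ in $W$ by a traversal of one of the parallel copies of $e$ (in the parallel case) or by the full collinear subdividing path of $e$ (in the collinear case); since the orientation of the inserted tangle extends that of $c$, these substitutions are head-to-tail compatible and produce a genuine directed walk $W'$ in $G'$. All inserted edges carry the sign $\varepsilon$, and the two edges of $W$ witnessing non-alternativity cannot both be $e$; hence at least one of them survives unchanged in $W'$, while an edge of the other sign is still present in $W'$ (either a surviving non-$e$ edge of $W$, or one of the sign-$\varepsilon$ inserted edges). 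Thus $W'$ is a walk in $G'$ carrying both signs, so $G'$ is not alternative either.

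The point that needs genuine care — and the main obstacle — is the bookkeeping in this last step: one must check that after substituting for the (possibly several) traversals of $e$ the walk $W'$ really does still carry both signs, which forces a short case analysis on whether $e$ itself was the positive edge, the negative edge, or neither of the two witnessing edges. Once the elementary move is settled, combining it with the inductive passage from integer tangles to general rational tangles and then to sums and products of them, and with the mirror-image reduction used in Theorem~\ref{new3} (a diagram is alternative iff its mirror image is), yields the biconditional.
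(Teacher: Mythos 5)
Your proposal is correct and follows the same route as the paper, which simply asserts in one line that $\Phi(D)$ is alternative iff $\Phi(D^{*})$ is; you have in addition supplied the converse direction (lifting a sign-mixed walk of $\Phi(D)$ to one of $\Phi(D^{*})$ through the parallel/collinear edge replacements) that the paper's proof leaves implicit. The walk-substitution and sign bookkeeping are sound, since the single edge $e$ can be at most one of the two witnessing edges and all inserted edges inherit its sign.
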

\begin{proof}
The result follows directly since the enhanced checkerboard graph $\Phi(D)$ is alternative iff the enhanced checkerboard graph $\Phi(D^{*})$.
\end{proof}
\begin{coro}
A given alternative link yields an infinite family of distinct
alternative links.
\end{coro}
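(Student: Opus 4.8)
The plan is to follow the template of the two preceding corollaries, combining Theorem \ref{new3} (to stay within the class of alternative links) with Lemma \ref{help} (to separate infinitely many of the resulting links). Fix an alternative diagram $D$ of $L$ and a crossing $c$ of $D$; by passing to the mirror diagram if necessary — which preserves alternative\-ness — assume $\sign(c)=1$. For each $n\ge 1$ let $L_{n}^{*}$ be the link obtained from $D$ by replacing $c$ with a horizontal integer tangle of $n$ crossings, oriented so that it extends the oriented crossing $c$; this is a one-term block of oriented rational tangles extending $c$, so by Theorem \ref{new3} every $L_{n}^{*}$ is alternative. Thus $\{L_{n}^{*}\}_{n\ge 1}$ is an infinite list of alternative links, and it remains only to extract an infinite pairwise-distinct subfamily.

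For the distinctness I would use the determinant. The tangle just used has continued fraction $[n]$, i.e.\ slope $\tfrac1n$, so $\alpha=n$ and $\beta=1$, and Lemma \ref{help} gives $\det(L_{n}^{*}) = \bigl|\,n\det(L_{0}) + \sign(xy)\det(L_{1})\,\bigr|$, where $L_{0},L_{1}$ are the two smoothings of $D$ at $c$. As soon as $\det(L_{0})\neq 0$ this sequence is eventually strictly increasing and unbounded, hence takes infinitely many values; so $\{L_{n}^{*}\}$ contains infinitely many links of pairwise distinct determinant, a fortiori infinitely many pairwise non-isotopic alternative links, which proves the corollary. If instead $\det(L_{0})=0$, I would replace $c$ by a family of orientation-compatible rational tangles whose slope has unbounded numerator, so that the coefficient of $\det(L_{1})$ in the formula of Lemma \ref{help} grows without bound, and conclude in the same way.

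Everything here except the final dichotomy is an immediate appeal to Theorem \ref{new3}, Lemma \ref{help}, and the definitions, so the one step I expect to require genuine care — and hence the main obstacle — is guaranteeing that the determinants really do take infinitely many values, i.e.\ ruling out the degenerate case in which \emph{both} smoothings $L_{0}$ and $L_{1}$ at the chosen crossing have vanishing determinant. I expect this degeneracy cannot occur at every crossing of a connected alternative diagram (at least one smoothing of a connected diagram with a crossing should have nonzero determinant), so making this precise — or else simply choosing $c$, or another alternative diagram of $L$, so that one smoothing has nonzero determinant — finishes the argument.
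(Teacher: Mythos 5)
Your proposal takes essentially the same route as the paper: the paper's proof is a single sentence asserting that the family $\{L^{*}\}$ contains infinitely many links of distinct determinant by Lemma \ref{help}, exactly the mechanism you use. The degenerate case you flag (both smoothings at $c$ having vanishing determinant) is not addressed in the paper at all, so your extra care there only makes the argument more complete, not different in kind.
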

\begin{proof}
The set of links $\{L^{*}\}$ is infinite as it contains an infinite subset of links of distinct determinants as a result of Lemma \ref{help}.
\end{proof}

\begin{ex}
The first non-alternating adequate knot in Rolfsen's knot table of
10 crossings or less is the knot $10_{152}$. It is also homogeneous and alternative
of a homogeneous adequate alternative diagram given in Figure \ref{figure7}. 
We use the table in \cite{CL} to identify some of the knots that can be obtained by replacing a crossing in 
the given knot diagram of $10_{152}$ by a block of rational tangles of three crossings that extends it. These knots are $12n437, 12n558, 12n0679$, $12n0680, 12n0688$, and $12n0689$.

\begin{figure}[h]
    \begin{center}
\includegraphics[scale=0.14]{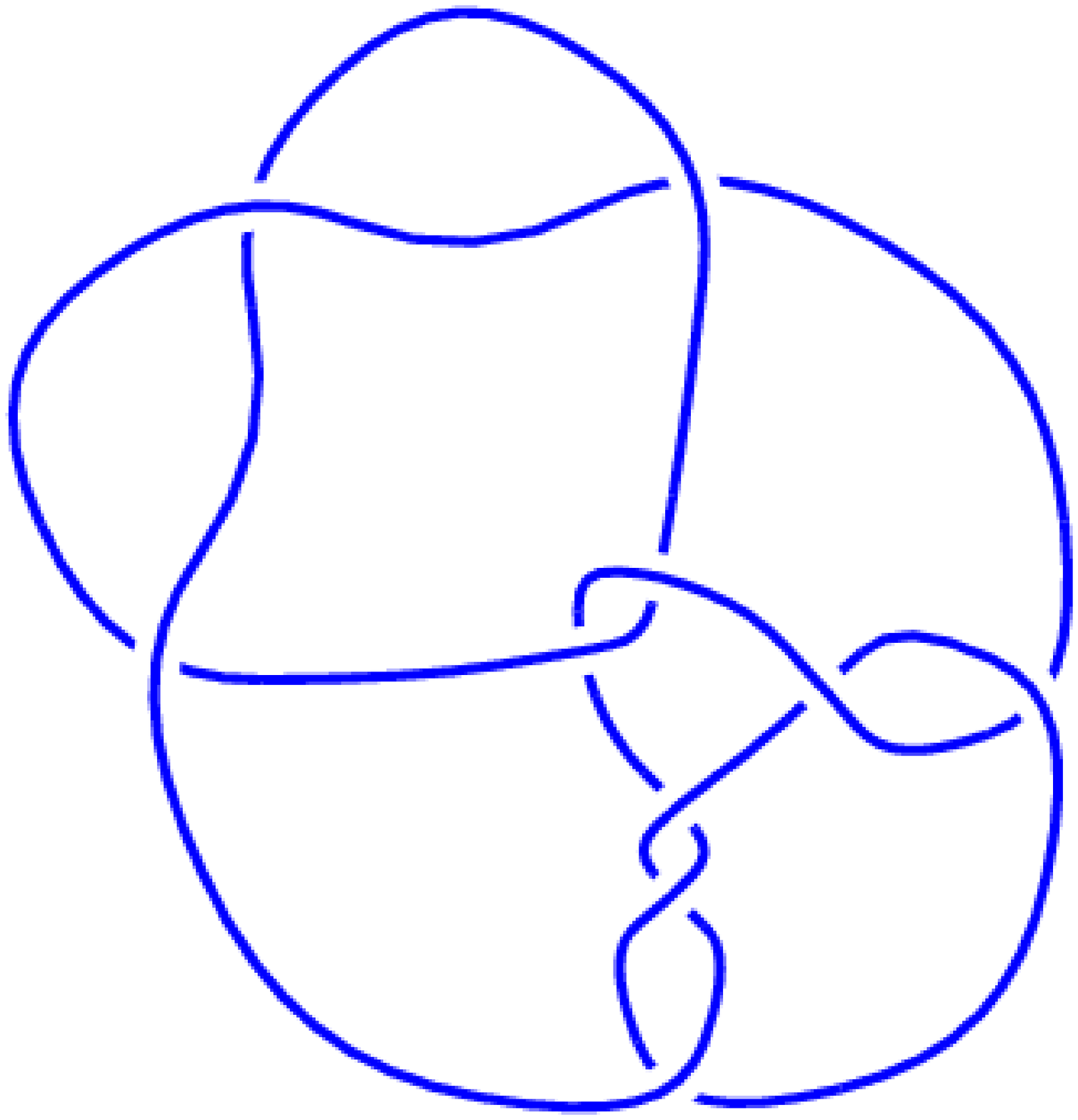}
\includegraphics[scale=0.14]{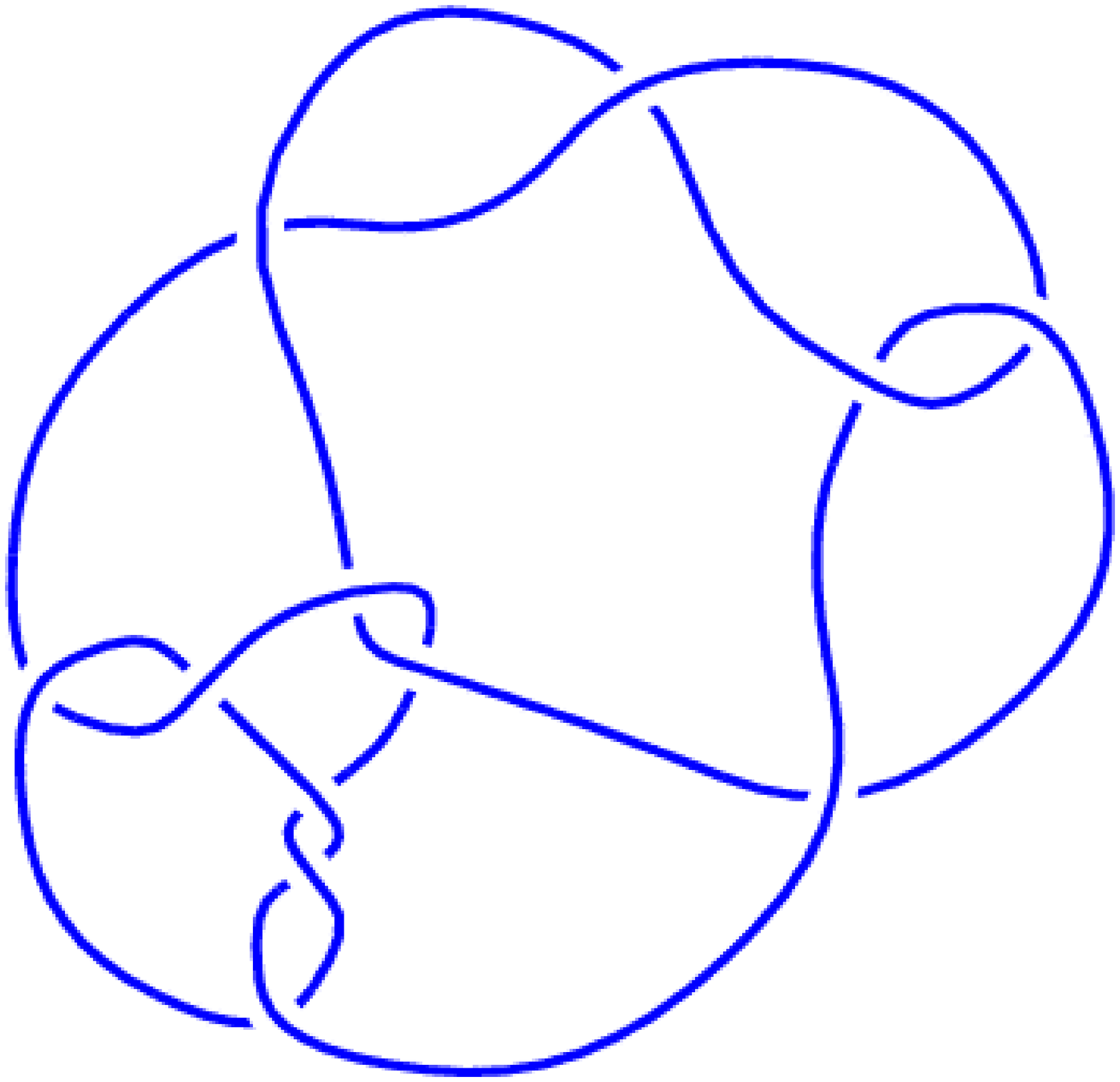}
\includegraphics[scale=0.14]{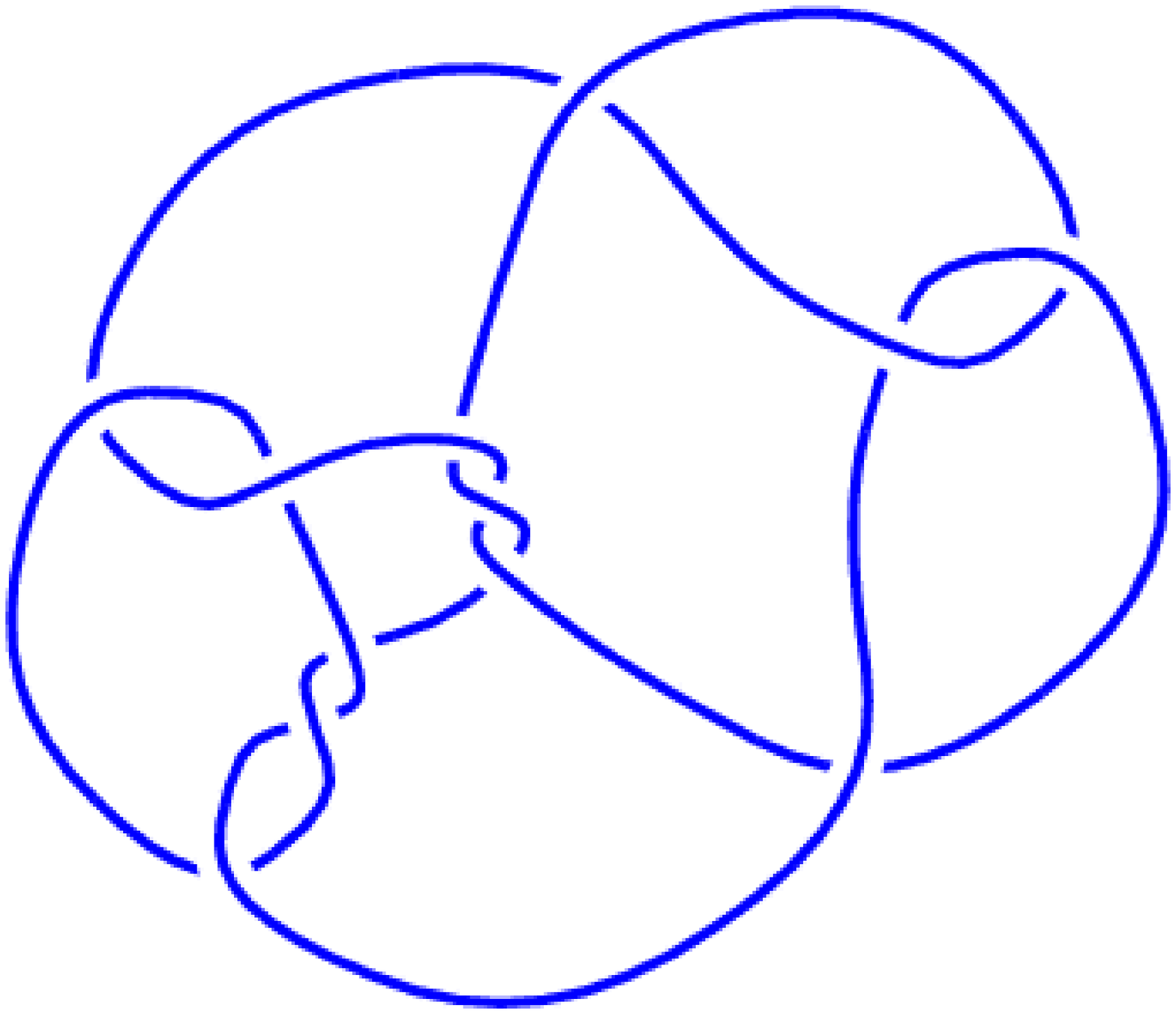}
\includegraphics[scale=0.14]{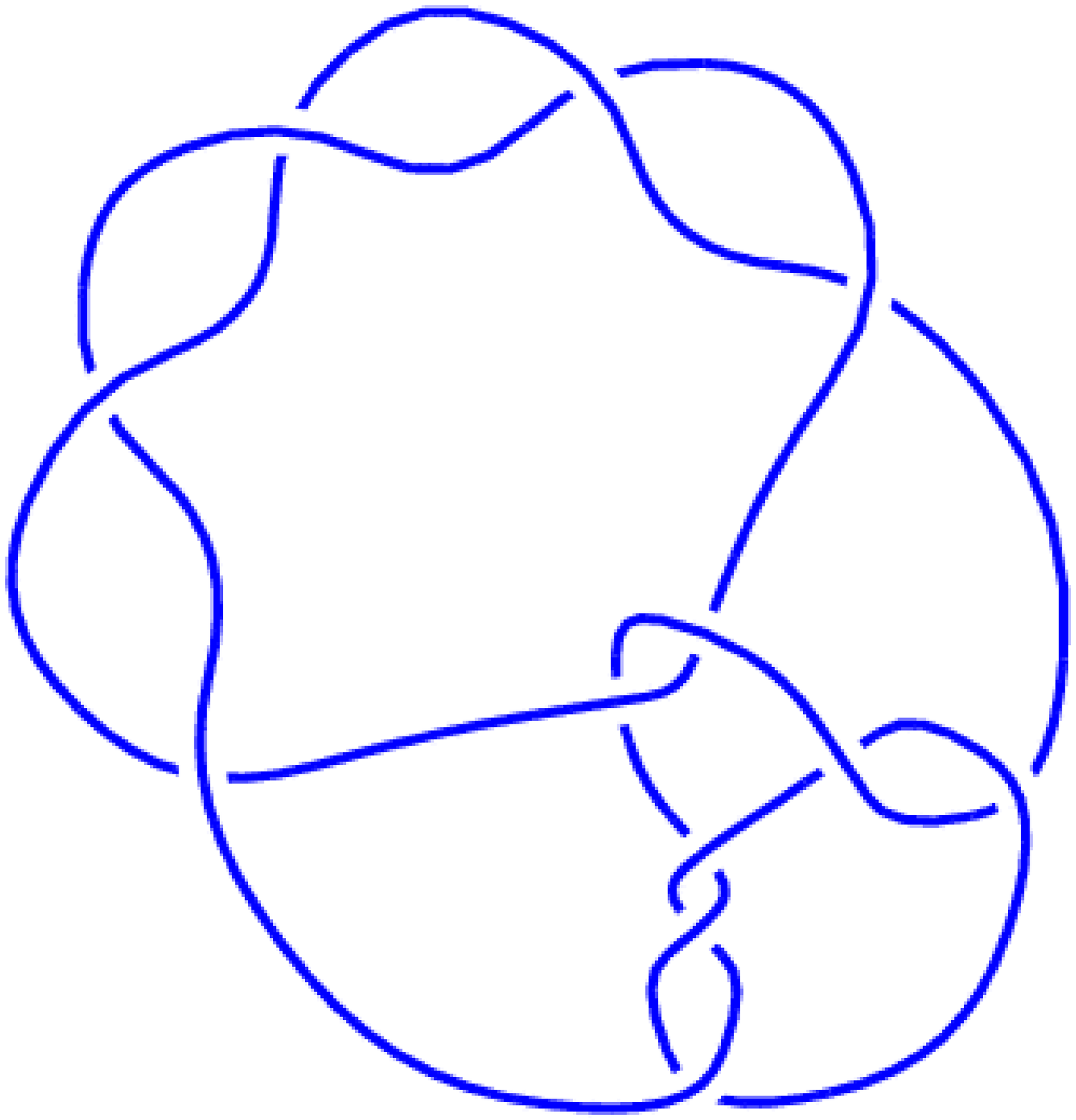}
\includegraphics[scale=0.14]{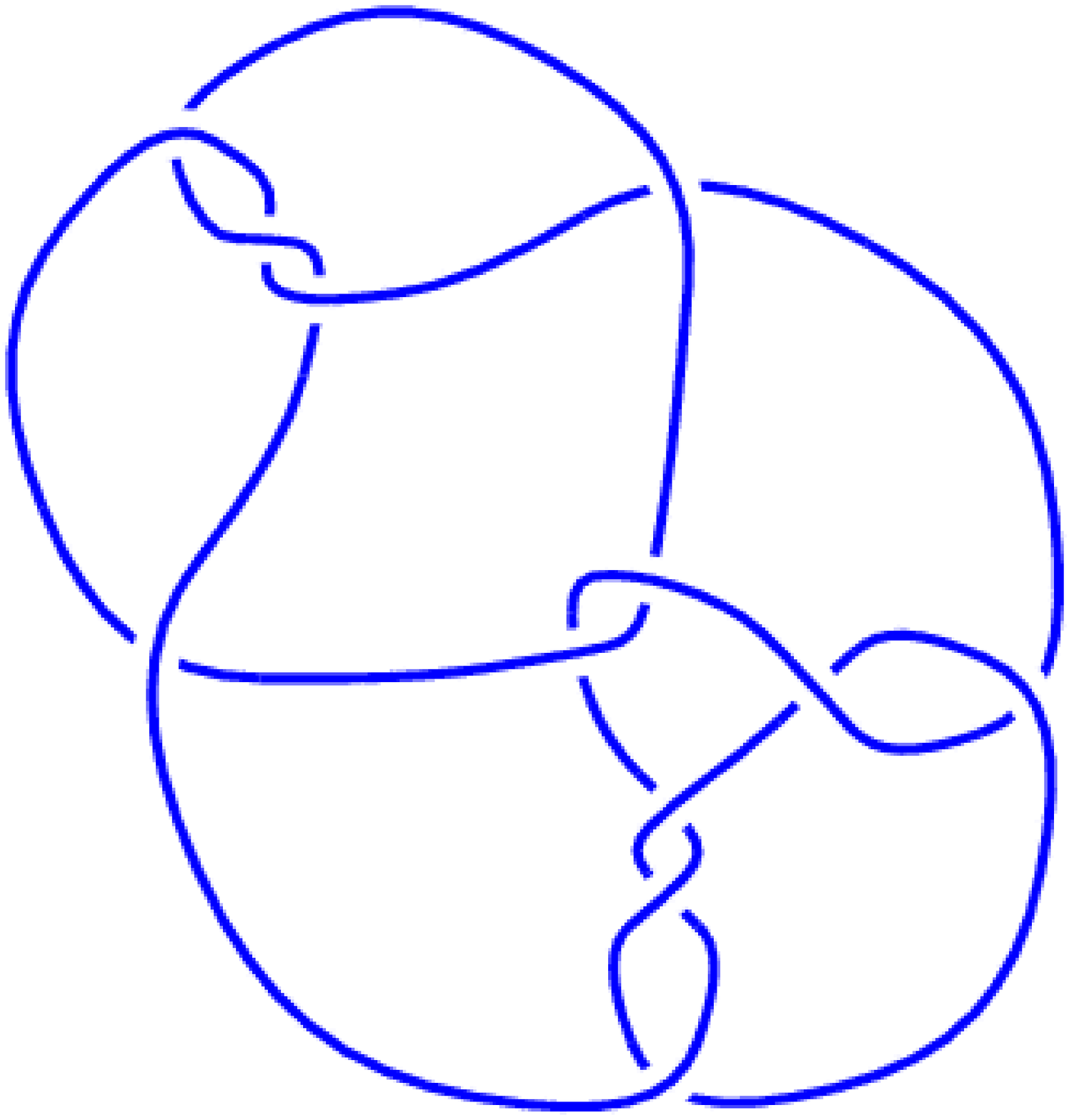}
\includegraphics[scale=0.14]{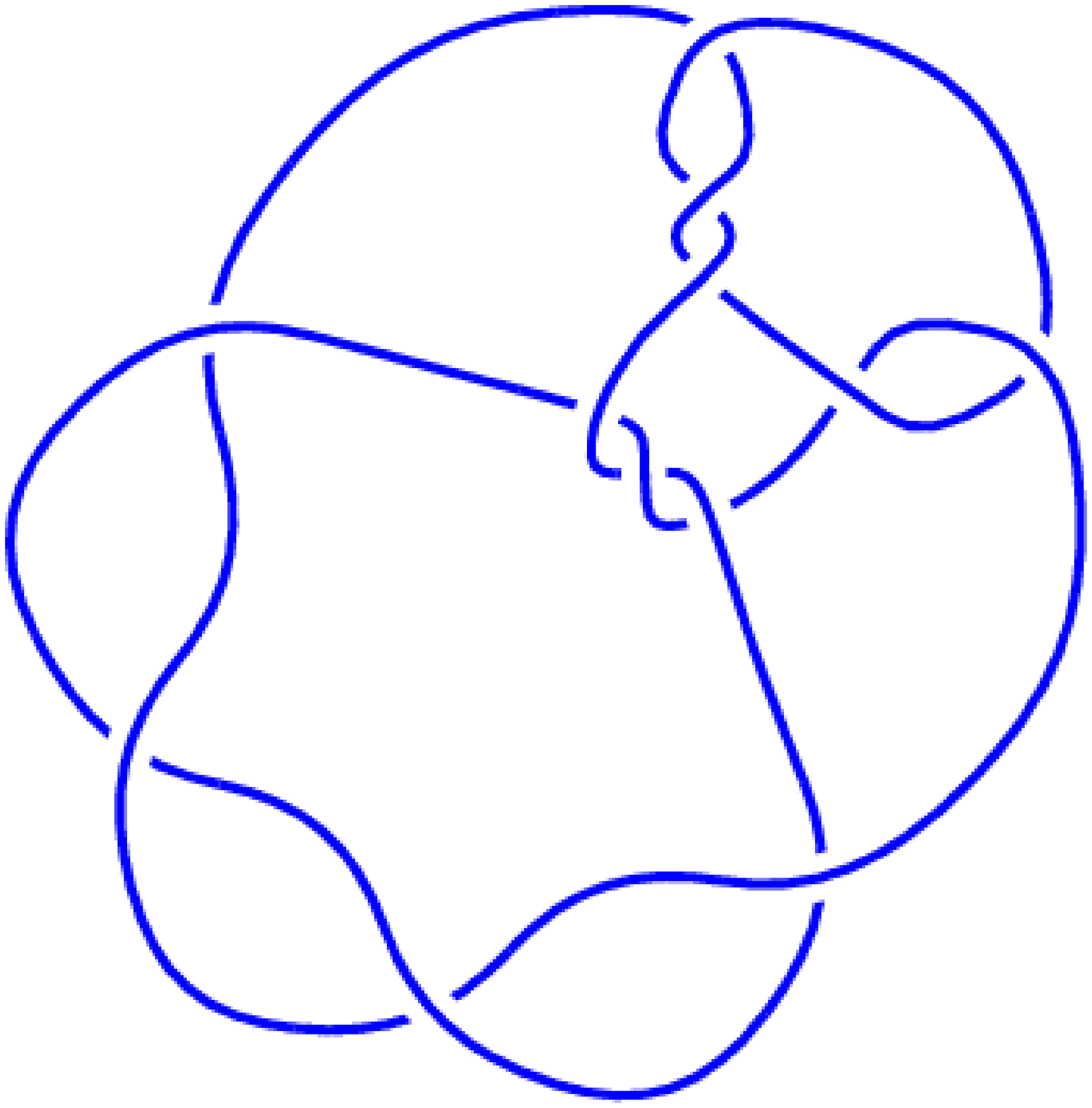}
\includegraphics[scale=0.14]{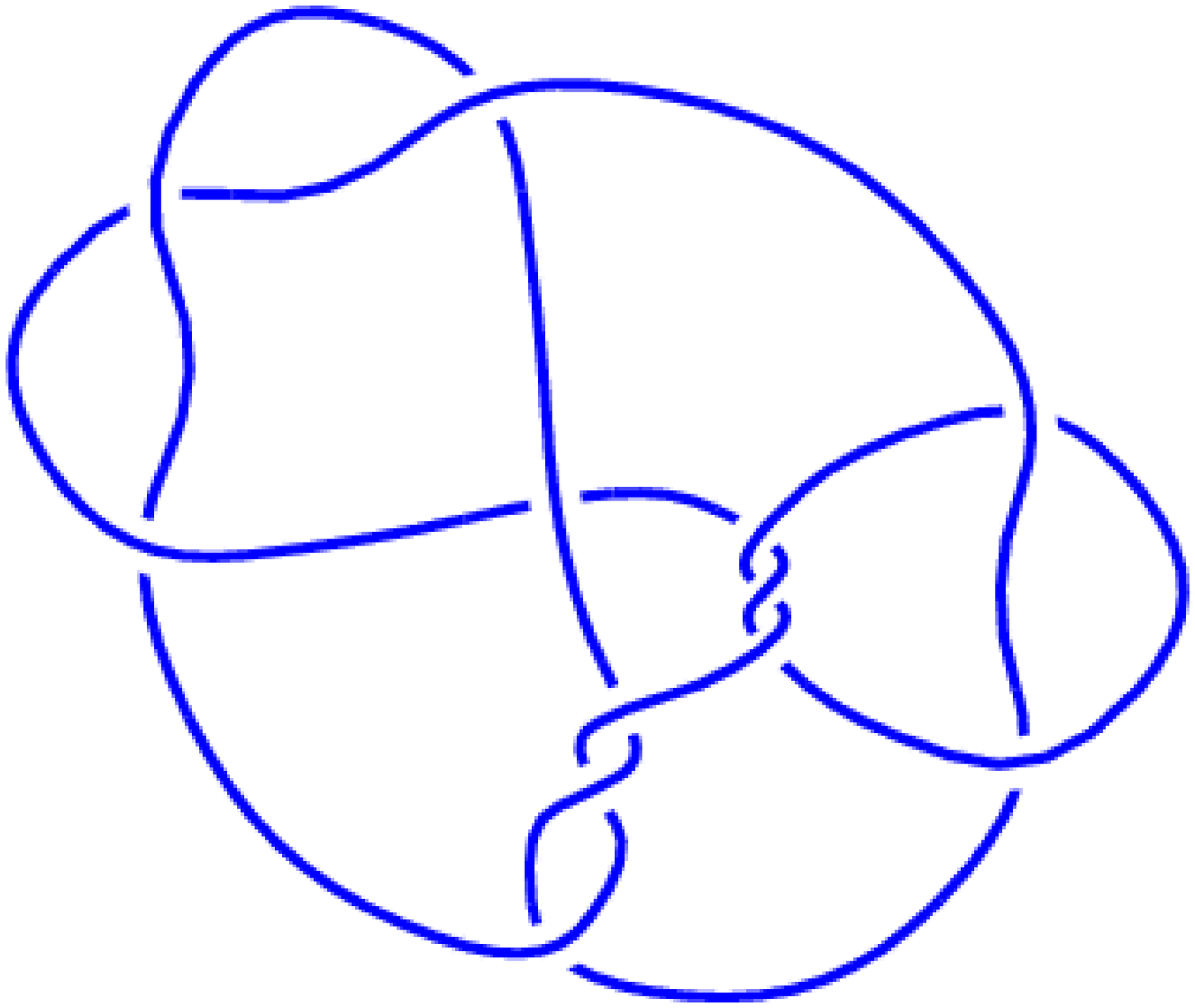}
\caption{The diagrams of the knots $10_{152}$, $12n437$, $12n558$, $12n679$, $12n680$,
$12n688$, and $12n689$, respectively.}
\label{figure7} \hspace{-1cm}
    \end{center}
\end{figure}
\end{ex}


\begin{thebibliography}{99}
\bibitem {CK} A. Champanerkar and Ilya Kofman, \textsl{Twisting Quasi-alternating Links},
Proc. Amer. Math. Soc., \textbf{137}(7)(2009), 2451--2458.
\bibitem {C} P. Cromwell, \textsl{Homogeneous links}, J. London
Math. Soc. \textbf{39}(1989), 535--552.
\bibitem {CL} J. C. Cha and C. Livingston, KnotInfo: Table of Knot Invariants,
http://www.indiana.edu/~knotinfo, November 28, 2016.
\bibitem {K} L. Kauffman, \textsl{New invariants in the theory of knots}, Amer. Math. Monthly \textbf{3}(1988), 195--242.
\bibitem {K2} L. Kauffman, \textsl{Formal Knot Theory}, Dover Publications, 2006.
\bibitem {KaLa} L. Kauffman and S. Lambropoulou, \textsl{Unknots and Molecular Biology}, Milan j. math., \textbf{74}(2006), 227--263.
\bibitem {KaLo} L. Kauffman and P. Lopes, \textsl{Determinants of rational knots}, Discrete Math. Theoret Comput. Sci., \textbf{11}(2)(2009), 111--122.
\bibitem {LT} W. Lickorish and M. Thistlethwaite, \textsl{Some links with non-trivial polynomials and their crossing-numbers}, Comment. Math. Helv. \textbf{63}(4)(1988), 527--539.
\bibitem {QCQ} K. Qazaqzeh, N. Chbili, and B. Qublan, \textsl{Characterization of
quasi-alternating Montesinos links}, J. Knot Theory Ramif. \textbf{24}(1) (2015), 155000.
\bibitem {Se} H. Seifert. \textsl{Uber das Geschlecht von Knoten}, Math. Ann. \textbf{110}(1935), 571--592.
\bibitem {Si} J. Siegert, \textsl{Alternative Links, Homogeneous Links, and Graphs}, arXiv: 1410.6954v1, 2014.
\bibitem {T1} M. Thistlewaite, \textsl{On the Kauffman polynomial
of an adequate link}, Invent. Math. \textbf{93}(1988), 285--296.
\end{thebibliography}
\end{document}